\newtheorem{thm}{Theorem}[section]
\newtheorem{cor}[thm]{Corollary}
\newtheorem{lem}[thm]{Lemma}
\newtheorem{prop}[thm]{Proposition}
\theoremstyle{definition}
\newtheorem{rem}[thm]{Remark}
\numberwithin{equation}{section}
\newcommand{\QQ}{\mathbb Q}
\newcommand{\ZZ}{\mathbb Z}
\newcommand{\CC}{\mathbb C}
\newcommand{\PP}{\mathbb P}
\newcommand{\ra}{\rightarrow}
\DeclareMathOperator{\End}{{End}}
\begin{document}

\title[ ]{Jacobians with complex multiplication}%
 \author{Angel Carocca, Herbert Lange and Rub{\'\i} E. Rodr{\'\i}guez}

\address{A. Carocca\\Facultad de Matem\'aticas,
Pontificia Universidad Cat\'olica de Chile, Casilla 306-22,
Santiago, Chile}
\email{acarocca@mat.puc.cl}

\address{H. Lange\\Mathematisches Institut,
              Universit\"at Erlangen-N\"urnberg\\Germany}
\email{lange@mi.uni-erlangen.de}

\address{R. E. Rodr{\'\i}guez\\Facultad de Matem\'aticas,
Pontificia Universidad Cat\'olica de Chile, Casilla 306-22,
Santiago, Chile}
\email{rubi@mat.puc.cl}
\thanks{The first and third author were supported by Fondecyt
grants 1095165 and  1060742  respectively} \subjclass{11G15, 14K22
} \keywords{Complex Multiplications, Jacobians, Abelian Varieties}

\begin{abstract}
We construct and study two series of curves whose Jacobians admit
complex multiplication. The curves arise as quotients of Galois
coverings of the projective line with Galois group metacyclic groups
$G_{q,3}$ of order $3q$ with $q \equiv 1 \mod 3$ an odd prime, and
$G_m$ of order $2^{m+1}$. The complex multiplications arise as
quotients of double coset algebras of the Galois groups of these
coverings. We work out the CM-types and show that the Jacobians are
simple abelian varieties.
\end{abstract}

\maketitle

\section{Introduction}

An abelian variety $A$ over an algebraically closed field is said to have
or to admit \textit{complex multiplication} if there is a CM-field $K$ of
degree $2 \dim(A)$ over $\QQ$ such that $K \subset \End_{\QQ}(A)$. A
smooth projective curve $C$ is said to admit \textit{complex
multiplication}, if its Jacobian variety does. In these cases one says
that $A$ (respectively $C$) has complex multiplication \textit{by} $K$. In
this paper we use Galois coverings of the projective line with metacyclic
Galois groups, in order to construct and investigate two series of curves
with complex multiplication.\\

For the first series let $q$ be an odd prime and $n$ a positive
integer such that $n|q-1$. Consider the group
$$
G_{q,n}:= \;\langle a,b\;|\; a^q=b^n=1,\; b^{-1}ab = a^k \rangle
$$
where $ 1 < k < q$ is such that $k^n \equiv 1 \mod q \;$ and $ \; \;
k^m \not\equiv 1 \mod q \;$ for all  $1 \leq m < n  $ (that is, the
order of $k$ mod $q$ is $n$). Denote the subgroup generated by $b$ by
$$
H = \langle b \rangle.
$$
In \cite{e} Ellenberg used Galois coverings $Y$ of the projective line
with Galois group $G_{q,n}$, such that the Jacobian of the quotient curve
$X = Y/H$ admits a totally real field as an endomorphism algebra. In this
note we show that his method can also be applied to construct smooth
projective curves admitting complex multiplication. In fact, we show that
for every $q$ as above with $n=3$ there is exactly one Galois covering $Y$
such that $X = Y/H$ has complex multiplication. To be more precise, our
first result is the following theorem. To state it, let $\zeta = \zeta_q$
denote a primitive $q$-th root of unity and let $\QQ(\zeta^{(n)})$ denote
the unique subfield of index $n$ of the cyclotomic field $\QQ(\zeta)$.
Clearly $\QQ(\zeta^{(n)})$
is a CM-field if and only if $n$ is odd, which we assume in the sequel.\\

\noindent {\bf Theorem 1.} {\it Suppose $Y$ is a Galois covering of
$\PP^1$ with group $G_{q,n}$ with $n$ odd and branch points $p_i \in \PP^1$ of
ramification index $n_i$ for $i=1, \ldots, r$ over an algebraically
closed field $\mathcal{K}$ of characteristic 0.

\begin{enumerate}
\item The curve $X = Y/H$ admits complex multiplication by $\QQ(\zeta^{(n)})$ if and only if $n= r=3$ and
$\{n_1,n_2,n_3\} = \{q,3,3\}$.
\item For every odd prime $q \equiv 1 \mod 3$ and $n=3$ there is, up to isomorphism, exactly one such curve $Y$.

Furthermore, in this case the following results hold.

\item The Jacobian $JY$ is isogenous to $JX^3$.
\item The Jacobian $JX$ is a simple abelian variety, of dimension $\dfrac{q-1}{6}$.
\item The function field of $Y$ over $\mathcal{K}$ is
$$
\mathcal{K}(Y) = \mathcal{K}(z,y)
$$
where $\mathcal{K}(\PP^1) = \mathcal{K}(x)$ and $z$ and $y$ satisfy the equations
$$
z^3 = \frac{x}{x-2} \; \text{ and } \;  y^q =
(z-1)(z-\omega_3)^k(z-\omega_3^2)^{k^2},
$$
with $\omega_3$ a primitive third root of unity.
\end{enumerate}}

\vspace{0.5cm}

For the second series of curves let $m \geq 3$ and consider the
group
$$
G_m = \langle a, b \;|\; a^{2^m}= b^2=1, bab= a^{d} \rangle
$$
where $d = 2^{m-1}-1$ (note that $d^2 \equiv 1$ mod $2^m$).

Let $\xi = \xi_{2^m}$ denote a primitive $2^m$-th-root of unity, and
observe that $\xi + \xi^d = \xi - \overline{\xi}$ is not real.

Consider the complex irreducible representation $V$ of $G_m$ given
by
$$
V(a) = \left(
      \begin{array}{cc}
        \xi & 0 \\
        0 & \xi^d \\
      \end{array}
    \right)
\ , \ V(b) = \left(
      \begin{array}{cc}
        0 & 1 \\
        1 & 0 \\
      \end{array}
    \right)
$$
\\
Its character field  $ \; K_V = \mathbb{Q}[\xi + \xi^d] \; $ is a cyclic
CM-field of degree $ \; [K_V : \mathbb{Q}] = 2^{m-2}$.

\vspace{2mm}

\noindent {\bf Theorem 2}. \textit{Let $ \; m \geq 3. \; $ Then
\begin{enumerate}
\item There exists a Galois covering   $ \; Y \to \mathbb{P}^1 \; $
with Galois group $G_m$, branched at $3$ points in $\PP^1$  with
monodromy $a, \; b  \; \; $ and $ \;  ab \; $.
\item The curve  $X = Y/\langle b\rangle$ and the Prym variety $P$ of the covering $Y \ra X$ have complex multiplication by $K_V$.
\item $JY$ is isogenous to $JX^2$.
\item  $JX$ and $P$ are isogenous simple principally polarized abelian
    varieties, of dimension $2^{m-3}$.
\item $Y$ and $X$ are hyperelliptic curves. An equation for $Y$ is
$$
y^2 = x(x^{2^{m-1}} -1).
$$
\end{enumerate}}

\vspace{0.5cm}

In the second section we fix the notation and collect some
preliminaries on the representations of the groups $G_{q,n}$ and
$G_m$.

Section 3 contains the proof of Theorem 1. To be more precise, in
3.1 we prove parts (1) and (2) of the theorem. In 3.2 we see how the
Jacobians of $X$ and $Y$ are related. In fact, $JY$ is isogenous to
the third power of $JX$ (part (3)). In particular $JY$ also admits
complex multiplication. In 3.3 we apply the theorem of
Chevalley-Weil in order to compute the CM-type of $JY$. This implies
part (4) of the theorem. Finally in 3.4 we work out the function
field of the curve $Y$.

In Section 4 we prove Theorem 2. To be more precise, in 4.1 we prove
(1), (2) and (3) of Theorem 2. Section 4.2 contains the proof of
(4). Moreover, in this section we compute the CM-types of $JY$ and
$JX$. Finally, in 4.3 we give the equations of the curves $Y$ and
$X$. We did not include the equation of $X$ in Theorem 2, because it
requires some more notation.

We would like to thank Eduardo Friedman and Wulf-Dieter Geyer for
some valuable conversations.

\section{Preliminaries}

\subsection{Some notation}
For any finite group $G$ we denote by $\chi_0$ the trivial
representation of $G$. If $H$ is any subgroup of $G$, $\chi_H$ will
denote the character of the representation of $G$ induced by the
trivial representation of $H$. If $H$ is cyclic generated by $g \in
G$, we also write $\chi_g$ for $\chi_{\langle g \rangle}$. In
particular, if $H = \{1\}$, then $\chi_1 = \chi_{\{1\}}$ is the
character of the regular representation of $G$.

If $V$ is a representation of $G$, then $V^H$ denotes the subspace
of $V$ fixed by $H$.

All curves will be smooth, projective and irreducible; for
simplicity we assume the curves to be defined over the field of
complex numbers. As in \cite{e}, the results remain valid over any
algebraically closed field of characteristic not dividing the
group orders by using $l$'adic cohomology and Grothendieck's
algebraic fundamental group instead of singular cohomology and
usual monodromy.

\subsection{Representations of $G_{q,n}$}

Let $G_{q,n}$ denote the group defined in the introduction. As a
semidirect product of the subgroup $N = \langle a \rangle$  by the
subgroup $H = \langle b \rangle$, it is a metacyclic group of order
$qn$.

Let $\omega = \omega_n$ be a primitive $n$-th root of unity. The
non-isomorphic one-dimensional representations of $G_{q,n}$ are the
following:
$$
\chi_i(a) = 1\;; \; \chi_i(b) = \omega^i \; \text{ for } \; i = 0,
\ldots n-1.
$$

There are exactly
$$
s := \frac{q-1}{n}
$$
complex irreducible representations $V_i$ of dimension $n$, defined as
follows. If $a^{i_1}, \ldots, a^{i_s}$ with $s = \frac{q-1}{n}$ is a set
of representatives for the action of $H$ on $N$ defined by the relation
$b^{-1}ab = a^k$, the corresponding orbits are $\{a^{i_j},a^{k i_j},
\ldots, a^{k^{n-1}i_j}\}$ for $j=1, \ldots,s$. If $\zeta = \zeta_q$
denotes a primitive $q$-th root of unity, then for $j = 1, \ldots, s$ the
representation $V_j$ is given by
$$
V_j(a) = \left( \begin{array}{ccccc}
                             \zeta^{i_j}&0&0& \cdots & 0\\
                             0& \zeta^{ki_j}& 0& \cdots & 0\\
                             \vdots & \vdots& \ddots & \cdots& \vdots\\
                             0& 0 & 0 & \cdots & \zeta^{k^{n-1}i_j}
                             \end{array} \right); \quad  \quad  V_j(b) =  \left( \begin{array}{cccccc}
                             0&0&0& \cdots & 0 & 1\\
                             1& 0& 0 & \cdots & 0 & 0\\
                             0&1&0& \cdots & 0 & 0\\
                             \vdots & \vdots & \ddots & \cdots & \vdots & \vdots \\
                             0 & 0 & 0 & \cdots & 1 & 0
                             \end{array} \right).
$$
For any $\ell|n$ consider the subgroup $S_{\ell} = \langle
b^{\frac{n}{\ell}} \rangle$ of $H$ of order $\ell$.  Also, $S_{\ell} \subset
\ker(\chi_i)$ if and only if $n | \frac{n}{\ell} i$, and there are exactly
$\frac{n}{\ell}$ such $i$'s. We have
$$
\dim V_j^H = 1 \; \text{ and } \;  \dim V_j^{S_{\ell}} = \frac{n}{\ell} \, ,
$$
and it is easy to check that
\begin{align*}
\chi_N  & = \chi_0 + \chi_1 + \cdots + \chi_{n-1},\\
\chi_H & = \chi_0 + \chi_{V_1} + \chi_{V_2} + \cdots + \chi_{V_s}, \\
\chi_{S_{\ell}} & = \sum_{0 \leq i < n \atop n| \frac{n}{\ell} i} \chi_i +
\frac{n}{\ell}(\chi_{V_1} + \chi_{V_2} + \cdots + \chi_{V_s}).
\end{align*}

We will use the following result, for the proof of which we refer to
\cite{br} or \cite{e}. Let $G$ denote a finite group acting on a
compact Riemann surface $Y$ with monodromy $g_1, g_2, \ldots, g_u$.
Let $\chi_Y$ denote the character of the representation
$H^1(Y,\QQ)$. Then
\begin{equation} \label{eq2.1}
\chi_Y = 2 \chi_0 + (2g(Y/G) -2 + u) \chi_{\{1 \}} - \sum_{j=1}^u \chi_{\langle g_j \rangle}.
\end{equation}

The following proposition appears essentially in \cite{e}.
\begin{prop} \label{prop2.1}
Suppose the metacyclic group $G_{q,n}$ acts on the compact Riemann
surface $Y$ with monodromy $g_1, \ldots, g_r, g_{r+1} \ldots,
g_{r+t}$, where $g_{j}$ has order $n_{j}$ (dividing $n$) for $j = 1,
\ldots, r$ and order $q$ for $j = r+1 \ldots,u= r+t$ and assume
$g(Y/G_{q,n}) = 0$. Then
$$
\chi_Y = (r-2) \sum_{i=1}^{n-1} \chi_i - \sum_{j = 1}^r \sum_{0<i<n \atop n| \frac{n}{n_j} i} \chi_i
+ \left( n(r+t-2) - \sum_{j=1}^r \frac{n}{n_j} \right) ( \chi_{V_1} + \cdots \chi_{V_s}).
$$
\end{prop}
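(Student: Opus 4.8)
The plan is to specialize the general character formula \eqref{eq2.1} to the situation at hand and then substitute the explicit decompositions of the relevant induced characters. Since $g(Y/G_{q,n}) = 0$ and $u = r+t$, formula \eqref{eq2.1} reads
$$
\chi_Y = 2\chi_0 + (r+t-2)\,\chi_{\{1\}} - \sum_{j=1}^r \chi_{\langle g_j\rangle} - \sum_{j=r+1}^{r+t}\chi_{\langle g_j\rangle}.
$$
The first reduction is to replace each $\chi_{\langle g_j\rangle}$ by one of the standard induced characters already computed in this subsection. This rests on the fact that induction from conjugate subgroups yields isomorphic representations, so $\chi_{\langle g_j\rangle}$ depends only on the conjugacy class of the cyclic group $\langle g_j\rangle$.

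First I would carry out this identification. Because $q$ is prime and $n\mid q-1$, we have $\gcd(q,n)=1$, so $N=\langle a\rangle$ is the unique (normal) Sylow $q$-subgroup and $H$ is a Hall complement. Hence every element of order $q$ generates $N$ itself, giving $\chi_{\langle g_j\rangle}=\chi_N$ for $j=r+1,\ldots,r+t$; and every element whose order $n_j$ divides $n$ lies in a conjugate of $H$ by conjugacy of Hall complements, so $\langle g_j\rangle$ is conjugate to the unique subgroup $S_{n_j}$ of order $n_j$ inside the cyclic group $H$, giving $\chi_{\langle g_j\rangle}=\chi_{S_{n_j}}$ for $j=1,\ldots,r$.

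Next I would substitute the three decompositions available: $\chi_N=\sum_{i=0}^{n-1}\chi_i$, the formula for $\chi_{S_{n_j}}$, and the decomposition of the regular character $\chi_{\{1\}}=\sum_{i=0}^{n-1}\chi_i + n(\chi_{V_1}+\cdots+\chi_{V_s})$ (the last because each irreducible occurs in the regular representation with multiplicity equal to its dimension). Collecting the coefficient of $(\chi_{V_1}+\cdots+\chi_{V_s})$ gives $n(r+t-2)-\sum_{j=1}^r\frac{n}{n_j}$ immediately. For a fixed $i$ with $1\le i\le n-1$, the coefficient of $\chi_i$ is $(r+t-2)-t-\#\{\,j\le r : n\mid\frac{n}{n_j}i\,\}$, which simplifies to $(r-2)$ minus the number of $j$ for which $i$ occurs in the inner sum; this is exactly the stated coefficient.

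The one point requiring care is the bookkeeping for $\chi_0$: it appears with coefficient $2$ from the leading term, $(r+t-2)$ from $\chi_{\{1\}}$, $-t$ from the $\chi_N$ contributions, and $-r$ from the $\chi_{S_{n_j}}$ (since $n\mid\frac{n}{n_j}\cdot 0$ holds for every $j$). These cancel to $0$, so no $\chi_0$ survives---a reassuring check, as $H^1(Y,\QQ)^{G_{q,n}}$ has dimension $2g(Y/G_{q,n})=0$. Assembling the surviving terms yields the asserted formula. The main obstacle is thus not the computation but the conjugacy statement of the second step, which is what allows the arbitrary monodromy generators to be replaced by the standard subgroups $N$ and $S_{n_j}$.
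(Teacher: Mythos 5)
Your proof is correct and follows essentially the same route as the paper: identify each $\chi_{\langle g_j\rangle}$ with $\chi_N$ or $\chi_{S_{n_j}}$ up to conjugacy, substitute the decompositions of $\chi_N$, $\chi_{S_\ell}$ and the regular character into \eqref{eq2.1}, and collect coefficients (including the cancellation of $\chi_0$). The only difference is that you justify the conjugacy step via Sylow/Hall theory, which the paper simply states as an observation, and your coefficient bookkeeping is spelled out in more detail.
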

\begin{proof}
First observe that if $g_j$ has order $n_j$ dividing $n$, then $\langle g_j \rangle$ is conjugate to $S_{n_j}$,
and that if $g_j$ has order $q$ then  $\langle g_j \rangle = N$.

Then, from \eqref{eq2.1} and using the formulas above for $\chi_N, \chi_H$ and
$\chi_{S_{\ell}}$, we obtain
\begin{eqnarray*}
\chi_Y & = & 2 \chi_0 + (r+t-2)\left( \chi_0 + \cdots + \chi_{n-1} + n(\chi_{V_1} + \cdots + \chi_{V_s}) \right)\\
&& - \left( t(\chi_0 + \cdots + \chi_{n-1}) + (\sum_{j=1}^r \frac{n}{n_j} ) ( \chi_{V_1} + \cdots + \chi_{V_s}) +
\sum_{j = 1}^r \sum_{0 \leq i < n \atop n| \frac{n}{n_j} i} \chi_i \right).
\end{eqnarray*}
This implies the assertion.
\end{proof}

\subsection{Representations of the group $G_m$}

For $m \geq 3$ let $G_m$ denote the group defined in the introduction. As
a semidirect product of the subgroup $N = \langle a \rangle$  by the
subgroup $H = \langle b \rangle$, it is a metacyclic group of order
$2^{m+1}$.

The group $G_m$ has $ \, 3 \, $  nontrivial representations of degree $ \,
1$, namely
$$
\chi_1: \; a \mapsto 1, \;  b \mapsto  -1, \qquad
\chi_2: \; a \mapsto -1, \;  b \mapsto  1,  \qquad
\chi_3: \; a \mapsto -1, \;  b \mapsto  -1.
$$
Let $\xi = \xi_{2^m}$ denote a primitive $2^m$-th root of unity. For
$i = 1, \ldots, m-1$ consider the representation $V_i$ defined by
$$
V_i(a) = \left(
      \begin{array}{cc}
        \xi^{2^{i-1}} & 0 \\
        0 & \xi^{2^{i-1}d} \\
      \end{array}
    \right),
\qquad \ V_i(b) = \left(
      \begin{array}{cc}
        0 & 1 \\
        1 & 0 \\
      \end{array}
    \right).
$$
Note that $V_1$ is the representation $V$ of the introduction. $V_i$
is a complex irreducible representation with Galois character field
$$
 K_i = \mathbb{Q}[\xi^{2^{i-1}} + \xi^{2^{i-1}d}]
$$
with $ \; [K_i : \mathbb{Q}] = 2^{m-1-i}$. Hence, $ \; V_i \; $ has
$ \; 2^{m-1-i} \; $ non-equivalent complex irreducible
Galois-conjugate representations $V_i^1, \ldots, V_i^{2^{m-1-i}}$.
Since these representations are obviously pairwise non-equivalent,
we get in this way
$$
2^{\: m-2} + 2^{\: m-3} + ... + 1 = 2^ {\:m - 1} - 1
$$
complex irreducible representations of degree $2$. Now
$$
4 \cdot 1^2 + ( 2^ {\:m - 1} -1)\cdot 2^{\: 2} = 2^{\: m + 1} = \vert \: G_m \: \vert,
$$
which implies that these are all the complex irreducible representations
of $G_m$.

The non-equivalent rational irreducible representations are, apart from
$\chi_0, \ldots, \chi_3$, the representations $W_i$, $i=1, \ldots, m-1$,
whose complexifications are
$$
W_i \otimes_{\QQ} \CC \simeq \oplus_{j=1}^{2^{m-1-i}} V_i^j \, .
$$
Note that $W_i$ is of degree $2^{m-i}$.

One checks that
\begin{align*}
\chi_N & = \chi_0 + \chi_1, \\
\chi_H & = \chi_0 + \chi_2 + \sum_{i=1}^{m-1} \chi_{W_i}, \\
\chi_{\langle ab \rangle} & = \chi_0 + \chi_3 + \sum_{i=2}^{m-1}
\chi_{W_i}.
\end{align*}

Using this, we immediately obtain from \eqref{eq2.1},
\begin{prop} \label{prop2.2}
Let $Y \ra \PP^1$ denote a Galois covering with Galois group $G_m$, $m
\geq 3$, ramified over $3$ points of $\PP^1$ with monodromy $a$, $b$ and
$(ab)^{-1}$. Then
$$
\chi_Y = \chi_{W_1}.
$$
\end{prop}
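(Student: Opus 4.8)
The plan is to read off the ingredients of formula \eqref{eq2.1} for this specific covering and then expand everything into the irreducible characters listed above. Since the base is $\PP^1$ we have $g(Y/G_m) = 0$, and there are $u = 3$ branch points with monodromy $g_1 = a$, $g_2 = b$, $g_3 = (ab)^{-1}$. Because $\langle (ab)^{-1}\rangle = \langle ab\rangle$, $\langle a\rangle = N$ and $\langle b\rangle = H$, the three induced characters appearing in \eqref{eq2.1} are exactly $\chi_N$, $\chi_H$ and $\chi_{\langle ab\rangle}$, all of which are recorded just above the statement. The coefficient of the regular character is $2g(Y/G_m) - 2 + u = -2 + 3 = 1$, so \eqref{eq2.1} specializes to
$$
\chi_Y = 2\chi_0 + \chi_{\{1\}} - \chi_N - \chi_H - \chi_{\langle ab \rangle}.
$$

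Next I would expand the regular character $\chi_{\{1\}}$ in terms of the irreducibles determined earlier. Each irreducible occurs in the regular representation with multiplicity equal to its degree, so the four one-dimensional characters $\chi_0, \chi_1, \chi_2, \chi_3$ each contribute once, while every $V_i^j$ (of degree $2$) contributes with multiplicity $2$; grouping the Galois-conjugate $V_i^j$ into $W_i$ via $\chi_{W_i} = \sum_j \chi_{V_i^j}$ gives
$$
\chi_{\{1\}} = \chi_0 + \chi_1 + \chi_2 + \chi_3 + 2\sum_{i=1}^{m-1} \chi_{W_i}.
$$

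Finally I would substitute the three induced-character formulas and collect coefficients. Summing them yields
$$
\chi_N + \chi_H + \chi_{\langle ab \rangle} = 3\chi_0 + \chi_1 + \chi_2 + \chi_3 + \chi_{W_1} + 2\sum_{i=2}^{m-1} \chi_{W_i},
$$
and inserting this together with the expansion of $\chi_{\{1\}}$ into the displayed expression for $\chi_Y$ cancels all of $\chi_0, \chi_1, \chi_2, \chi_3$ as well as every $\chi_{W_i}$ with $i \geq 2$, leaving precisely $\chi_{W_1}$.

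The computation is entirely mechanical, so the only point demanding care — and the closest thing to an obstacle — is the bookkeeping in these two expansions: getting the multiplicity $2$ of each $W_i$ in the regular representation right, and noticing that $W_1$ is absent from $\chi_{\langle ab \rangle}$ (the sum there begins at $i = 2$). This asymmetry is exactly what prevents the final $\chi_{W_1}$ from cancelling, and it reflects the fact that the monodromy element $(ab)^{-1}$ generates a different conjugacy class of cyclic subgroup than $a$ and $b$ do.
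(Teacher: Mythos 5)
Your proposal is correct and is exactly the paper's argument: the paper's proof consists of the single remark that the result follows ``immediately'' from \eqref{eq2.1} together with the displayed expansions of $\chi_N$, $\chi_H$ and $\chi_{\langle ab \rangle}$, which is precisely the computation you carry out. Your bookkeeping (the multiplicity $2$ of each $W_i$ in the regular character, and the absence of $W_1$ from $\chi_{\langle ab\rangle}$) is accurate, so you have simply written out in full what the paper leaves implicit.
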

\vspace{0.3cm}

\section{Curves with Galois group $G_{q,n}$}

\subsection{Curves with complex multiplication by $\QQ(\zeta_q^{(n)})$}
Let $Y \ra \PP^1$ be a Galois covering with group $G_{q,n}$ over an
algebraically closed field $\mathcal{K}$ of characteristic $0$. Consider the
curve
$$
X := Y/H
$$
where $H$ denotes the subgroup generated by $b$. In this section we
use Proposition \ref{prop2.1} in order to determine those Galois
coverings $Y$ for which the curve $X$ admits complex multiplication.

Recall that a \textit{CM-field} $K$ of degree $2g$ is a totally
complex quadratic extension of a totally real field of rank $g$ over
$\QQ$.

The main result of this section is the following Proposition. As in the
introduction let $\QQ(\zeta_q^{(n)})$ denote the unique subfield of index
$n$ of the cyclotomic field $\QQ(\zeta_q)$. It is a CM-field if and only
if $n$ is odd, which we assume in the sequel.

\begin{prop}  \label{prop3.1}
Suppose $Y$ is a Galois covering of $\PP^1$ with group $G_{q,n}$ and
branch points $p_i \in \PP^1$ of ramification index $n_j$ dividing $n$ for
$i=1, \ldots, r$ and equal to $q$ for $j = r+1, \ldots, r+t$.

Then the curve $X = Y/H$ has complex multiplication by
$\QQ(\zeta_q^{(n)})$ if and only if $n=r+t=3$ and $\{n_1,n_2,n_3\} =
\{3,3,q\}$.
\end{prop}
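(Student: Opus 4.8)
The plan is to turn the CM condition into a single numerical condition on the branch data and then settle the resulting Diophantine problem.

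\textbf{Reduction to the dimension of $JX$.} Since $X=Y/H$ we have $H^1(X,\QQ)=H^1(Y,\QQ)^H$, so $\chi_X=\chi_Y^H$ and I only have to read off the $H$-invariants of the character given in Proposition \ref{prop2.1}. The trivial character $\chi_0$ does not occur in $\chi_Y$, and $\dim\chi_i^H=1$ for $i=0$ and $0$ otherwise (because $\chi_i(b)=\omega^i=1$ forces $i=0$ in the range $0\le i<n$); hence every $\chi_i$-summand with $i\ge 1$ dies and only the $V_j$-summands survive, each contributing $\dim V_j^H=1$. Writing
$$
\mu:=n(r+t-2)-\sum_{j=1}^r\frac{n}{n_j}
$$
for the common multiplicity of the $V_j$ in $\chi_Y$, I obtain $\dim H^1(X,\QQ)=s\mu=\frac{q-1}{n}\mu$ and therefore $\dim JX=\frac{q-1}{2n}\mu$.

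\textbf{Identifying the acting field and the condition $\mu=1$.} The eigenvalues of $V_j(a)$ are the Galois orbit $\{\zeta_q^{k^\ell i_j}\}_{\ell=0}^{n-1}$, so $\tr V_j(a)$ is a Gaussian period generating the fixed field of $\langle k\rangle\le(\ZZ/q)^\times$, which is exactly $\QQ(\zeta_q^{(n)})$. The $s$ representations $V_1,\dots,V_s$ form one $\Gal(\QQ(\zeta_q)/\QQ)$-orbit and assemble into a single rational irreducible representation $W$ with commutant $\End_G(W)=\QQ(\zeta_q^{(n)})$. In the isotypic decomposition of $JY$ the $W$-block carries this field, and since $\QQ(\zeta_q^{(n)})=\End_G(W)$ commutes with the idempotent $e_H=\frac{1}{|H|}\sum_{h\in H}h\in\QQ[G]$, the field descends to $\End_\QQ(JX)$ as soon as the $W$-block of $JX$ is nonzero, i.e. as soon as $\mu\ge 1$. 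As $[\QQ(\zeta_q^{(n)}):\QQ]=\frac{q-1}{n}$, the curve $X$ has complex multiplication by $\QQ(\zeta_q^{(n)})$ precisely when $[\QQ(\zeta_q^{(n)}):\QQ]=2\dim JX$, that is, when $\mu=1$.

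\textbf{The Diophantine analysis (the main obstacle).} It remains to prove that $\mu=1$ is equivalent to $n=r+t=3$ and $\{n_1,n_2,n_3\}=\{3,3,q\}$; the datum $n=3$, $r=2$, $t=1$, $n_1=n_2=3$ visibly gives $\mu=1$, so the content is the converse. I rewrite $\mu=1$ as $r+t-2-\sum_{j=1}^r\frac1{n_j}=\frac1n$. As $n$ is odd and each $n_j\mid n$ is a genuine ramification index, $n_j\ge 3$, so $\frac1n\le\frac1{n_j}\le\frac13$. Positivity forces $r+t\ge 3$, while $\sum\frac1{n_j}\le\frac r3$ gives $\frac{2r}{3}+t\le 2+\frac1n\le\frac73$; subtracting the two inequalities yields $\frac r3\ge\frac23$, hence $r\ge 2$, and then $r\in\{2,3\}$ with $t\le 1$. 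For $r=3$ one is forced to $t=0$ and $\sum_{j=1}^3\frac{n}{n_j}=n-1$: if $3\nmid n$ then $n_j\ge 5$ and the sum is $\le\frac{3n}{5}<n-1$; if $3\mid n$ but fewer than two $n_j$ equal $3$, the sum is $\le\frac n3+\frac{2n}{5}=\frac{11n}{15}<n-1$; and if $n_1=n_2=3$ then $n_3=\frac{3n}{n-3}$ forces $(n-3)\mid 9$, i.e. $n\in\{4,6,12\}$, none odd — so $r=3$ is impossible. For $r=2$ one is forced to $t=1$ and $\frac1{n_1}+\frac1{n_2}=\frac{n-1}{n}$, where the left side is $\le\frac23$ and the right side is $\ge\frac23$, with equality only for $n_1=n_2=3$ and $n=3$. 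This is the unique solution, giving $n=r+t=3$ and $\{n_1,n_2,n_3\}=\{3,3,q\}$, and I expect this case analysis to be the delicate part of the argument.
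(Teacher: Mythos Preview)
Your proof is correct and follows essentially the same route as the paper: reduce the CM condition to the single equation $\mu=n(r+t-2)-\sum_j n/n_j=1$, solve the resulting Diophantine problem, and for the converse use the double coset algebra $\QQ[H\backslash G/H]$ to exhibit $\QQ(\zeta_q^{(n)})$ inside $\End_\QQ(JX)$. The only cosmetic difference is that the paper computes $g(X)$ via Hurwitz and then organizes the case analysis by the value of $t$ (ruling out $t=2$ and $t=0$ before settling $t=1$), whereas you compute $g(X)$ via $\chi_Y^H$ and organize the cases by $r\in\{2,3\}$; both arguments are short and equivalent.
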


\begin{proof}
 According to the Hurwitz formula, the  genus of $ \; X $ is
$$
g(X) = \frac{q-1}{2} \left( r+t-2-\sum_{j = 1}^{r} \frac{1}{n_j} \right).
$$
If the curve $X$ has complex multiplication by $\QQ(\zeta_q^{(n)}) \; $
then $[\QQ(\zeta_q^{(n)}): \QQ] = 2\dim(JX)$. This is the case if and only
if
$$
\frac{q-1}{n} = 2 \, \frac{q-1}{2} \left(  r+t-2-\sum_{j = 1}^{r}
\frac{1}{n_j} \right)
$$
which is equivalent to
\begin{equation}  \label{eq3.2}
\sum_{j=1}^r \frac{n}{n_{j}} = n(r+t-2) - 1.
\end{equation}
This implies $t \leq 2$.

If $t = 2$, then \eqref{eq3.2} says $\sum_{j=1}^r
\displaystyle\frac{\:n}{n_j} = nr -1$, a contradiction, since $\;
\displaystyle\frac{n}{n_j}$ is either 1 or $\geq 3$.

If $t = 0$, then \eqref{eq3.2} says $\sum_{j=1}^r
\displaystyle\frac{\:n}{n_j} = n(r-2) -1$. Since $n$ is odd, $n_j
\geq 3$, which implies $\; \displaystyle\frac{nr}{3} \geq nr -2n
-1$ and thus $r \leq 3 + \displaystyle\frac{3}{2n}$. This gives $r
= 2$ or 3. In both cases \eqref{eq3.2} cannot be satisfied. For
$r=2$ the right hand side would be negative and for $r=3$ the even
number $n-1$ is not the sum of 3 genuine divisors of $n$.

Hence $t=1$, in which case \eqref{eq3.2} says $\sum_{j=1}^r
\displaystyle\frac{n}{n_j} = n(r-1) -1$. By the same argument as
above, this implies $\; \displaystyle\frac{nr}{3} \geq nr -n -1$,
which gives $\; \displaystyle\frac{2}{3}r \leq 1 +
\displaystyle\frac{1}{n}$. Since $n \geq 3$, this implies $r \leq
\displaystyle\frac{3}{2}(1 + \displaystyle\frac{1}{3}) = 2$ and
thus $r=2$. But then \eqref{eq3.2} says
$$
\frac{n}{n_1} + \displaystyle\frac{n}{n_2} = n-1
$$
whose only odd solution is $n=n_1=n_2=3$.

Hence we have shown that the curve $X = Y/H$ has  complex
multiplication by $\QQ(\zeta_q^{(n)})$ only if $n = r + t = 3 \; $
and $ \; \{n_1,n_2,n_3\} = \{3,3,q\}$.

Certainly there exists a Galois covering of this type, with
branch points $p_1, p_2, p_3$ in $\PP^1$ and stabilizers $G_{p_1} =
\;\langle a \rangle,\; G_{p_2} = \;\langle b \rangle$ and $G_{p_3} =
\langle ab \rangle$, since $ab(ab)^{-1}= 1$.\vspace{2mm}\\
We will now finish the proof by showing that under the assumptions
$\QQ(\zeta_q^{(3)})$ is contained in $\End_{\QQ}(JX)$.\\

Let $W$ be the rational irreducible representation of $G_{q,3}$ whose
complexification is the direct sum of all the irreducible complex
representations $V_j$ of dimension $3$.

Note that Proposition \ref{prop2.1} in our case says $\chi_Y = \chi_W$.
This, together with  \cite[p. 202, Corollaire]{g}, implies that there is
an isomorphism of $\QQ[G]$-modules
$$
H^1(Y, \QQ) \simeq W
$$
and an isomorphism of $\QQ[H \backslash G/H]$-modules
\begin{equation} \label{eq2.2}
H^1(X,\QQ) \simeq H^1(Y, \QQ)^H \simeq (W^H)^{\oplus m}
\end{equation}
with $ \; m = n(r+t-2) - \displaystyle\sum_{j=1}^r
\displaystyle\frac{n}{n_j} = 3( 2 + 1 - 2) -
\displaystyle\sum_{j=1}^2 1  = 1$. \\

Moreover, \eqref{eq2.2} implies that the canonical homomorphism $\QQ[H
\backslash G/H] \ra \End_{\QQ}(JX)$ induces a homomorphism $\rho: \QQ[H
\backslash G/H] \ra \End(H^1(X, \QQ)) = \End(W^H)$, and the image of
$\rho$ is isomorphic to the image of $\QQ[H \backslash G/H]$ in
$\End_{\QQ}(JX)$.

Now the image of $\QQ[G]$ in $\End(W)$ is isomorphic to the $3 \times
3$-matrix algebra over $\QQ(\zeta_q^{(3)})$, and the fact that $\dim V_1^H
=1$ implies that $ \; \QQ[H \backslash G/H] \: \cong \: \QQ \oplus
\QQ(\zeta_q^{(3)})$ (see \cite[Theorem 4.4]{cr} and \cite{e}).

Hence the image of $\QQ[H \backslash G/H]$ in $\End(W^H)$ is isomorphic to
$\QQ(\zeta_q^{(3)})$ and therefore
$$
\QQ(\zeta_q^{(3)}) \subset \End_{\QQ}(JX).
$$
\end{proof}

We have thus proven the following result.

\begin{cor} \label{cor2.2}
Let $Y$ be a Galois covering of $\PP^1$ with group $G := G_{q,3}$.
Then $X = Y/H$ has complex multiplication by $\QQ(\zeta_q^{(3)})$ if
and only if $Y \ra \PP^1$ is isomorphic to the $G$-covering with
branch points $p_1, p_2, p_3$ in $\PP^1$ and stabilizers $G_{p_1} =
\;\langle a \rangle,\; G_{p_2} = \;\langle b \rangle$ and $G_{p_3} =
\langle ab \rangle$.
\end{cor}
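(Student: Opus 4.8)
The plan is to deduce the corollary from Proposition \ref{prop3.1}, so that the only genuinely new content is the uniqueness of the covering. By Proposition \ref{prop3.1}, $X=Y/H$ has complex multiplication by $\QQ(\zeta_q^{(3)})$ precisely when $Y\to\PP^1$ has three branch points with ramification indices $\{3,3,q\}$. The covering described in the statement, with stabilizers $\langle a\rangle$, $\langle b\rangle$ and $\langle ab\rangle$, has exactly this ramification type: the orders are $q$, $3$ and $3$, and $a,b,(ab)^{-1}$ is a valid monodromy triple generating $G$ since $a\cdot b\cdot(ab)^{-1}=1$. This gives the \emph{if} direction immediately. For the \emph{only if} direction it remains to show that every $G$-covering of $\PP^1$ with ramification type $\{3,3,q\}$ is isomorphic to this particular one.

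To prove uniqueness I would invoke the Riemann existence theorem: connected Galois $G$-coverings of $\PP^1$ branched over three points correspond to generating vectors $(g_1,g_2,g_3)$ with $g_1g_2g_3=1$ and $\langle g_1,g_2,g_3\rangle=G$, taken up to simultaneous conjugation, up to $\Aut(G)$, and up to permutation of the three branch points (each permutation being realized by an automorphism of $\PP^1$, since $\mathrm{PGL}_2$ is sharply $3$-transitive). First I would identify the relevant conjugacy data. The elements of order $q$ are exactly the nontrivial powers of $a$, each generating $N=\langle a\rangle$. For an element outside $N$ a direct computation using $b^{-1}ab=a^k$ gives $(a^ib)^3=a^{i(1+k+k^2)}$, and since $k$ has order $3$ modulo $q$ one has $1+k+k^2\equiv 0 \pmod q$; hence every element of $G\setminus N$ has order $3$. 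Thus the subgroups of order $3$ are precisely the $q$ Sylow $3$-subgroups, which form a single conjugacy class containing $H=\langle b\rangle$.

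With this in hand I would normalize a triple of type $\{3,3,q\}$. After permuting branch points I may assume $g_1$ has order $q$, say $g_1=a^s$ with $s\neq0$. The relation $g_2g_3=g_1^{-1}\in N$ forces $g_2$ and $g_3$ to lie in the two cosets $Nb$ and $Nb^2$; say $g_2=a^ib$. Conjugating by a suitable power of $a$, using that $m\mapsto m(1-k^2)$ is a bijection of $\ZZ/q$ (as $k^2\not\equiv1$), I may assume $g_2=b$. The assignment $a\mapsto a^u$, $b\mapsto b$ defines an automorphism of $G$ for every unit $u$ modulo $q$; applying it with $u=s^{-1}$ normalizes $g_1=a$ while fixing $g_2=b$. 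Then $g_3=(g_1g_2)^{-1}=(ab)^{-1}$ is forced, and $\langle a,b\rangle=G$ gives generation. Hence every such triple is equivalent to $(a,b,(ab)^{-1})$, so the covering is unique up to isomorphism and has the stated stabilizers. The main obstacle I anticipate is bookkeeping: one must check that the notion of \emph{isomorphism of $G$-coverings} in the statement corresponds exactly to the combination of conjugation, $\Aut(G)$ and branch permutation used above, and that these operations genuinely collapse all admissible triples to the single class $(a,b,(ab)^{-1})$ rather than to several.
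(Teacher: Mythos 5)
Your proposal is correct, and its skeleton coincides with the paper's: the paper states the corollary immediately after Proposition \ref{prop3.1} with the words ``We have thus proven the following result'', the point being that the proof of Proposition \ref{prop3.1} already contains both the numerical classification ($n=r+t=3$ and $\{n_1,n_2,n_3\}=\{3,3,q\}$, plus the double-coset argument giving $\QQ(\zeta_q^{(3)})\subset \End_{\QQ}(JX)$) and the observation that the triple $a,\,b,\,(ab)^{-1}$ yields a covering of exactly this type. What the paper does \emph{not} spell out is the step you prove in detail: that every $G_{q,3}$-covering of $\PP^1$ with three branch points and ramification indices $\{3,3,q\}$ is isomorphic to that particular covering; the paper treats this as classical, deferring uniqueness of $Y$ to the Remark following the corollary, with a reference to Lefschetz. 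Your monodromy normalization fills this in correctly: all elements of $G\setminus N$ have order $3$, since $(a^ib)^3=a^{i(1+k+k^2)}$ and $k^2+k+1\equiv 0 \pmod q$; conjugation by $a^m$ sends $a^ib$ to $a^{i+m(1-k^2)}b$, with $1-k^2$ invertible mod $q$; and $a\mapsto a^u$, $b\mapsto b$ is an automorphism because the defining relation $b^{-1}a^ub=(a^u)^k$ is preserved. Your closing caveat (matching ``isomorphism of $G$-coverings'' with the combination of conjugation, $\Aut(G)$ and branch permutation) is the right thing to flag, and it is harmless here: a non-equivariant isomorphism of coverings twists the $G$-action by an automorphism, any permutation of the three branch points is realized by an element of $\mathrm{PGL}_2$ acting on $\PP^1$, and since stabilizers over a branch point are defined only up to conjugacy---all order-$3$ subgroups being conjugate Sylow subgroups, and the order-$q$ stabilizer being the normal subgroup $N=\langle a\rangle$---the stabilizer description in the statement is insensitive to these twists. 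In short, your argument is the paper's argument plus a self-contained proof of the uniqueness assertion that the paper only cites.
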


\begin{rem}
For each $q$ there exists a unique curve $Y$ satisfying the conclusions
in the Corollary; hence for every $q$ there is only one such curve $X$, as was observed already by Lefschetz in \cite[p. 463]{l}.
\end{rem}

\subsection{The Jacobians $JY$ and $JX$}

Let $q$ be an odd prime with $q \equiv 1 \mod 3$ and denote
$$
G := G_{q,3} = \; < a,b\;|\; a^q = b^3 = 1, \; b^{-1}ab = a^k >
$$
where $ k^3 \equiv 1 \mod q,\; 1 < k < q$. Let $Y$ and $X$ denote
the curves of Corollary \ref{cor2.2}. In this section we want to see
how the Jacobians $JY$ and $JX$ are related.

First, the Hurwitz formula gives
\begin{equation} \label{eq3.1}
g(Y) = \frac{q-1}{2} \; \text{ and } \;  g(X) = \frac{q-1}{6}.
\end{equation}
In particular $g(Y) = 3 g(X)$.
The following proposition is more precise.
\begin{prop} \label{thm3.1}
The Jacobian of $Y$ is isogenous to the third power of the Jacobian of X:
$$
JY \sim JX^3.
$$
In particular $JY$ admits complex multiplication by $\QQ(\zeta_q)$.
\end{prop}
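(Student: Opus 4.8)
The plan is to prove the isogeny $JY \sim JX^3$ by comparing the rational Hodge structures (equivalently, the $\QQ$-vector spaces $H^1(\cdot,\QQ)$ together with their algebraic structure) on both sides as $\QQ[G]$-modules. The main tool is Proposition \ref{prop2.1}, which in our situation ($n=3$, $r=2$, $t=1$) gives $\chi_Y = \chi_W$, where $W$ is the rational irreducible representation whose complexification is $V_1 \oplus \cdots \oplus V_s$. By Gabriel's result cited after Proposition \ref{prop3.1}, this character equality upgrades to an isomorphism of $\QQ[G]$-modules $H^1(Y,\QQ) \cong W$. The isogeny $JY \sim JX^3$ will then follow from decomposing $W$ as a $\QQ[G]$-module and matching the pieces against $H^1(X,\QQ) \cong W^H$.

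First I would recall from the representation theory of $G = G_{q,3}$ that each $V_j$ has dimension $3$ and that $\dim V_j^H = 1$ for every $j$. Consequently, as a $\QQ[G]$-module, $W$ decomposes so that each rational-irreducible constituent appears with multiplicity equal to its degree divided by the dimension of its endomorphism algebra; the key numerical input is that $W$ restricted to the isotypic decomposition yields exactly three copies of the $H$-fixed part in the appropriate sense. Concretely, since the image of $\QQ[G]$ in $\End(W)$ is a $3 \times 3$ matrix algebra over $\QQ(\zeta_q^{(3)})$, the module $W$ is isomorphic to three copies of a simple module whose $H$-fixed subspace is $W^H$. Thus at the level of rational Hodge structures we obtain
$$
H^1(Y,\QQ) \cong W \cong (W^H)^{\oplus 3} \cong H^1(X,\QQ)^{\oplus 3},
$$
using the identification $H^1(X,\QQ) \cong H^1(Y,\QQ)^H \cong W^H$ from \eqref{eq2.2}.

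The next step is to promote this isomorphism of $\QQ[G]$-modules (rational Hodge structures) to an isogeny of abelian varieties. For this I would invoke the standard dictionary between abelian varieties up to isogeny over $\CC$ and polarizable rational Hodge structures of weight one: an isomorphism of the underlying $\QQ$-Hodge structures $H^1(JY,\QQ) \cong H^1(JX,\QQ)^{\oplus 3}$ induces an isogeny $JY \sim JX^3$. One must check that the Hodge decomposition is respected, but since the $G$-action is holomorphic the isomorphism $H^1(Y,\QQ) \cong (W^H)^{\oplus 3}$ is automatically compatible with the Hodge filtration, as both sides carry the induced complex structure from the $G$-equivariant analytic structure. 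The final assertion, that $JY$ admits complex multiplication by $\QQ(\zeta_q)$, then follows because $JX$ has complex multiplication by $\QQ(\zeta_q^{(3)})$ (Proposition \ref{prop3.1}), and the full endomorphism algebra of $JX^3 \sim JY$ contains the $3\times 3$ matrix algebra over $\QQ(\zeta_q^{(3)})$, inside which sits the field $\QQ(\zeta_q)$ of degree $3(q-1)/3 = q-1 = 2\dim(JY)$.

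The main obstacle I anticipate is the passage from the module isomorphism to the actual isogeny with control over the complex multiplication: one has to ensure that the abstract $\QQ[G]$-module isomorphism is realized by an honest homomorphism of abelian varieties compatible with polarizations, rather than merely an isomorphism of cohomology groups. The cleanest route is to use the isotypic decomposition of $JY$ up to isogeny induced by the central idempotents of $\QQ[G]$ (the group-algebra decomposition of Jacobians), which directly produces an isogeny factor of $JY$ isogenous to a power of the abelian subvariety cut out by $W$, and then to identify that factor with $JX$ via the quotient map $Y \to X = Y/H$. This makes the multiplicities transparent and sidesteps any delicate Hodge-theoretic bookkeeping.
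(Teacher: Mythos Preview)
Your proposal ultimately lands on the same argument the paper uses: the group-algebra (isotypic) decomposition of $JY$. The paper invokes \cite[Proposition~5.2]{cr} directly to obtain $JY \sim B_1 \times B_2^{3}$ and $JX \sim B_1 \times B_2$ (with $B_i$ attached to the rational irreducibles $W_i$), and then shows $\dim B_1 = 0$ via Rojas's dimension formula \cite[Theorem~5.12]{r}. You instead deduce $B_1 = 0$ from Proposition~\ref{prop2.1} (i.e.\ $\chi_Y = \chi_W$), which is equivalent and was already observed in the proof of Proposition~\ref{prop3.1}; either route is fine. (Minor point: the cohomological result you cite is Grothendieck's, not Gabriel's.)

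Where your write-up needs tightening is the middle passage. The displayed isomorphism $W \cong (W^H)^{\oplus 3}$ cannot be meant as $\QQ[G]$-modules, since $W$ is $\QQ[G]$-irreducible and $W^H$ is not $G$-stable. What makes the Hodge argument work is that the Hodge structure on $H^1(Y,\QQ)=W$, being $G$-equivariant, lies in $\End_{\QQ[G]}(W)\otimes\RR = \QQ(\zeta_q^{(3)})\otimes\RR$ and hence acts as a $K$-scalar on the $3$-dimensional $K$-vector space $W$ (where $K=\QQ(\zeta_q^{(3)})$); consequently any $K$-linear splitting $W \cong (W^H)^{\oplus 3}$ is automatically a splitting of Hodge structures. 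Your phrase ``both sides carry the induced complex structure from the $G$-equivariant analytic structure'' does not yet capture this, and you rightly flag it as the main obstacle. Your final paragraph, which reverts to the isotypic decomposition of $JY$ via central idempotents and identifies the factor with $JX$ through the quotient $Y\to Y/H$, is exactly the paper's approach and avoids the issue altogether. The CM conclusion is handled the same way in both: $\QQ(\zeta_q)$ embeds in $M_3(\QQ(\zeta_q^{(3)})) \subset \End_{\QQ}(JX^3)$ as a degree-$3$ extension.
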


\begin{proof}
There are exactly $2$ nontrivial rational irreducible
representations of $G$, namely $W_1$, whose complexification is
$\chi_1 \oplus \chi_2$, and $W_2$, whose complexification is $V_1
\oplus \cdots \oplus V_s$, with $\chi_i$ and $V_j$ as defined in
Section 2. Correspondingly, according to \cite[Proposition 5.2]{cr},
there are abelian subvarieties $B_1$ and $B_2$ of $JY$, uniquely
determined up to isogeny, such that
$$
JY \sim B_1^{\frac{\dim \chi_1}{m_1}} \times B_2^{\frac{\dim V_1}{m_2}}
$$
and
$$
JX \sim B_1^{\frac{\dim \chi_1^H}{m_1}} \times B_2^{\frac{\dim
V_1^H}{m_2}},
$$
where $m_i$ is the Schur index of the corresponding representation.
Hence $m_1 = m_2 = 1$. Since $\dim V_1 = 3$ and $\dim V_1^H = 1$, it
suffices to show for the first assertion that $\dim B_1 = 0$. This
is a consequence of \cite[Theorem 5.12]{r}, which in our case says
$$
\dim B_1 = [K_{\chi_1} : \QQ](-\dim \chi_1 + \frac{1}{2}\sum_{j=1}^3 (\dim
\chi_1 - \dim \chi_1^{G_j})),
$$
where $K_{\chi_1}$ denotes the field generated by the values of the
character $\chi_1$ over $\QQ$, $G_1 = \langle a \rangle$, $G_2 = \langle b
\rangle$, and $G_3 = \langle ab \rangle$. Hence we get
$$
\dim B_1 = 2(-1 + \frac{1}{2}(1-1+1-0+1-0)) = 0 \, ,
$$
which completes the proof of the first assertion. According to Corollary
\ref{cor2.2}, $JX$ admits complex multiplication by $\QQ(\zeta_q^{(3)})$.
The last assertion follows from the first and the fact that $\QQ(\zeta_q)$
is a degree-three CM-extension of $\QQ(\zeta_q^{(3)})$.
\end{proof}

\begin{rem}
The Prym variety $P$ of the threefold covering $Y \ra X$ is defined as the
connected component containing $0$ of the kernel of the norm map $JY \ra
JX$. Since $JY$ is isogenous to the product $JX \times P$, we deduce from
Corollary \ref{cor2.2} and Proposition \ref{thm3.1} that
$$
P \sim JX^2.
$$
In particular, $P$ admits complex multiplication by a CM-extension of
degree $2$ of $\QQ(\zeta_q^{(3)})$.
\end{rem}

\subsection{The CM-types of $JY$ and $JX$}

Recall that a CM-field $K$ of degree $2g$ admits $g$ pairs of complex
conjugate embeddings into the field of complex numbers. A
\textit{CM-type} of $K$ is by definition the choice of a set of
representatives of these pairs; that is, a set of $g$ pairwise
non-isomorphic embeddings $K \hookrightarrow \CC$.

The Jacobian $JC$ of a curve $C$ admits complex multiplication by a
CM-field $K$, if and only if $H^1(C, \QQ)$ is a $K$-vector space of
dimension $1$. In this case the Hodge decomposition
$$
H^1(C,\CC) = H^0(C,\omega_C) \oplus \overline{H^0(C,\omega_C)}
$$
induces a CM-type on the field $K$. It is called the
\textit{CM-type} of the Jacobian $JC$. In this section we compute
the CM-types of the Jacobians $JY$ and $JX$ of Section 4.

We need some elementary preliminaries. The congruence
$$
k^3 \equiv 1 \mod q
$$
admits exactly $2$ integer solutions with $2 \leq k \leq q-2$. If $k$ is
one such solution, $q-1-k$ is the other one.

For any integer $n$ let $[n]$ denote the uniquely determined integer with
$$
0 \leq [n] \leq q-1 \; \text{ and } \;  [n] \equiv n \mod q
$$
and for any integer $\ell, \; 1 \leq \ell \leq q-1$ consider the set
$$
O_{\ell} := \{ \ell, [k \ell], [k^2 \ell] \}.
$$
\begin{lem} \label{lem4.1}
Let $k$ be an integer with $2 \leq k \leq \frac{q-1}{2}$ and $k^3
\equiv 1 \mod q$. Then

\noindent {\em (1)}
$$
1 + k + [k^2] = q;
$$
{\em (2)} For any $\ell,\; 1 \leq \ell \leq q-1$,
$$
\ell + [k\ell] + [k^2 \ell] = \left\{ \begin{array}{cl}
                                     q& \text{if $2$ of the numbers in $O_{\ell}$ are smaller than } \;  \frac{q-1}{2},\\
                                     2q & \text{otherwise}.
                                     \end{array}  \right.
$$
\end{lem}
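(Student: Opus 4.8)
The plan is to reduce everything to the single congruence $k^2+k+1\equiv 0\pmod q$ and then finish by elementary size estimates. Since $k^3\equiv 1\pmod q$ and $2\leq k\leq\frac{q-1}{2}$ forces $k\not\equiv 1$, the factorization $k^3-1=(k-1)(k^2+k+1)$ together with the primality of $q$ immediately gives $k^2+k+1\equiv 0\pmod q$. This one relation carries both parts.

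For part (1) I would read the congruence as $k^2\equiv -1-k\equiv q-1-k\pmod q$ and then pin down the representative. Because $2\leq k\leq\frac{q-1}{2}$, the integer $q-1-k$ lies in the interval $[\frac{q-1}{2},\,q-3]\subseteq\{0,\dots,q-1\}$, so it \emph{is} the canonical representative, i.e. $[k^2]=q-1-k$. Hence $1+k+[k^2]=1+k+(q-1-k)=q$, which is exactly the claim. For part (2) I first multiply the relation by $\ell$ to get $\ell+k\ell+k^2\ell=\ell(k^2+k+1)\equiv 0$, so $\ell+[k\ell]+[k^2\ell]\equiv 0\pmod q$. The three numbers are pairwise distinct: since $k$ has order $3$ modulo $q$ and $\ell\not\equiv 0$, the residues $\ell,k\ell,k^2\ell$ cannot coincide. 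As each lies in $\{1,\dots,q-1\}$, their sum is a multiple of $q$ strictly between $0$ and $3q$, hence equals either $q$ or $2q$.

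It remains to prove the dichotomy, which I would state as: the sum equals $q$ if and only if at least two of the three numbers are $<\frac{q-1}{2}$ (the complementary case giving $2q$). If two of them are $<\frac{q-1}{2}$, then, $q$ being odd, each of those two is $\leq\frac{q-3}{2}$, so their sum is $\leq q-3$; adding the third ($\leq q-1$) gives a total $\leq 2q-4<2q$, forcing the sum to be $q$. Conversely, if the sum is $q$ but \emph{at most one} of the numbers is $<\frac{q-1}{2}$, then two of them are $\geq\frac{q-1}{2}$; being distinct, the larger is $\geq\frac{q+1}{2}$, so these two already sum to $\geq\frac{q-1}{2}+\frac{q+1}{2}=q$, and the third ($\geq 1$) pushes the total above $q$, a contradiction. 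Thus the sum is $q$ exactly when at least two numbers fall below $\frac{q-1}{2}$, and $2q$ otherwise.

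The only delicate point, and the place to be careful, is the boundary value $\frac{q-1}{2}$: distinctness of the orbit $\{\ell,[k\ell],[k^2\ell]\}$ is precisely what prevents two of the numbers from both equalling $\frac{q-1}{2}$, and it is what makes the strict inequality in the converse go through. I would also flag that the condition must be understood as \emph{at least} two numbers being $<\frac{q-1}{2}$, rather than exactly two: the count can genuinely be three (for instance with $q=31$, $k=5$, $\ell=8$ one gets $O_\ell=\{8,9,14\}$, all below $15$, with sum $31=q$), and the inequality argument above handles this uniformly.
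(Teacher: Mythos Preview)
Your argument is correct and follows essentially the same route as the paper: both parts rest on $1+k+k^2\equiv 0\pmod q$ together with elementary size bounds to pin the sum down to $q$ or $2q$. Your write-up is in fact a bit more careful than the original---you compute $[k^2]=q-1-k$ directly in (1), you make the distinctness of the orbit explicit to handle the boundary case $\frac{q-1}{2}$ in (2), and your remark that the hypothesis should be read as ``at least two'' (with the $q=31$ example) is a genuine clarification of the statement.
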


\begin{proof}
(1): We have $1 + k + [k^2] \equiv 0 \mod q$. On the other hand,
$1 < 1 + k + [k^2] \leq 1 + \frac{q-1}{2} + q-1 = \frac{3}{2}q -\frac{1}{2}$. This implies the assertion.\\
(2): We have $\ell +k\ell + k^2 \ell = \ell(1 +k + k^2) \equiv 0
\mod q$. Hence $q$ divides $\ell + [k \ell] + [k^2 \ell]$. On the
other hand, $1 \leq \ell + [k\ell] + [k^2 \ell] < 3q$. This implies
$$
\ell + [k\ell] + [k^2 \ell] = q \; \;\text{or} \;\; 2q.
$$
Suppose $2$ of the $3$ numbers are less than $\frac{q-1}{2}$. Then
$\ell + [k\ell] + [k^2 \ell] < 2\frac{q-1}{2} + q-1 < 2q$ and hence
$\ell + [k\ell] + [k^2 \ell] = q$.

In the remaining case at least one of the numbers is $>
\frac{q-1}{2}$, implying $\ell + [k\ell] + [k^2 \ell] > 1 + 2
\frac{q-1}{2} = q$ which completes the proof.
\end{proof}
The following lemma gives a criterion for the set $O_{\ell}$ to
contain two elements less than $\frac{q-1}{2}$.
\begin{lem} \label{lem3.8}
For any integer $\ell;\; 1 \leq \ell \leq q-1$ consider the real
number
$$
\beta_{\ell} := \sin \left(\frac{2\pi \ell}{q} \right) + \sin \left(\frac{2\pi k\ell}{q} \right) +
\sin \left(\frac{2\pi k^2\ell}{q} \right).
$$
Then the set $O_{\ell}$ contains two elements less than
$\frac{q-1}{2}$ if and only if $\beta_{\ell} > 0$.
\end{lem}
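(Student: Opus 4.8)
The plan is to read the sign of each summand of $\beta_\ell$ off the size of the corresponding element of $O_\ell$, and then to use Lemma \ref{lem4.1}(2) to count how many summands are positive. Since $\sin(2\pi m/q)$ depends only on $m$ modulo $q$, I would first rewrite
$$
\beta_\ell = \sum_{m \in O_\ell} \sin\!\left(\frac{2\pi m}{q}\right).
$$
For $m \in \{1,\dots,q-1\}$ the angle $2\pi m/q$ lies strictly between $0$ and $2\pi$ and is never a multiple of $\pi$ (as $q$ is odd), so each summand is nonzero: it is positive exactly when $m \le \frac{q-1}{2}$ and negative exactly when $m \ge \frac{q+1}{2}$. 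Hence the number of positive summands of $\beta_\ell$ equals the number of elements of $O_\ell$ that do not exceed $\frac{q-1}{2}$.

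Next I would invoke Lemma \ref{lem4.1}(2): the three elements of $O_\ell$ sum to $q$ or to $2q$, the value being $q$ precisely when two of them are less than $\frac{q-1}{2}$. My aim is to show $\beta_\ell>0$ exactly in this first case. The subtlety I anticipate here is a boundary check: the element $m=\frac{q-1}{2}$ has positive sine yet is not itself ``less than $\frac{q-1}{2}$,'' and it can genuinely occur in $O_\ell$. I would dispose of this by noting that if $\frac{q-1}{2}\in O_\ell$, the constraint $\sum \in\{q,2q\}$ forces the other two elements to be either both strictly below $\frac{q-1}{2}$ (sum $q$) or both above $q/2$ (sum $2q$); so the count of positive summands still aligns with the condition ``two elements $<\frac{q-1}{2}$.''

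The heart of the argument is a single trigonometric inequality, and this is the real obstacle, since two positive and one negative summand do \emph{not} automatically sum to something positive. Suppose the sum is $q$ and exactly two elements $a,b\le\frac{q-1}{2}$ give positive summands while the third is large; then $a+b+c=q$ yields $\sin(2\pi c/q)=-\sin(2\pi(a+b)/q)$, and with $x=2\pi a/q$, $y=2\pi b/q$ (both in $(0,\pi)$, and $x+y<\pi$ since $a+b\le\frac{q-1}{2}$) I get
$$
\beta_\ell = \sin x + \sin y - \sin(x+y) = \sin x\,(1-\cos y) + \sin y\,(1-\cos x) > 0,
$$
each factor being positive. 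The case of three positive summands is immediate.

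Finally, I would handle the complementary case $\sum_{m\in O_\ell} m = 2q$ by symmetry rather than repeating the computation. Under the reflection $\ell \mapsto q-\ell$ one has $O_{q-\ell}=\{\,q-m : m\in O_\ell\,\}$, which interchanges ``small'' and ``large'' elements, and $\beta_{q-\ell}=-\beta_\ell$ because $\sin$ is odd. Thus the two size conditions are swapped and the sign of $\beta$ is reversed, so the $\sum=2q$ case maps to the already-treated $\sum=q$ case and gives $\beta_\ell<0$ there. Combining, $\beta_\ell>0$ holds exactly when $\sum_{m\in O_\ell} m = q$, which by Lemma \ref{lem4.1}(2) is exactly when $O_\ell$ contains two elements less than $\frac{q-1}{2}$.
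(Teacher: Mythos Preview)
Your proof is correct and follows essentially the same route as the paper: both arguments reduce the forward implication to the identity $\sin x+\sin y-\sin(x+y)=\sin x\,(1-\cos y)+\sin y\,(1-\cos x)>0$ for the two ``small'' angles, and both obtain the converse via the reflection $\ell\mapsto q-\ell$, which sends $O_\ell$ to $O_{q-\ell}$ and flips the sign of $\beta_\ell$. Your explicit bookkeeping with Lemma~\ref{lem4.1}(2) and the boundary case $m=\tfrac{q-1}{2}$ is extra care the paper leaves implicit, but the underlying argument is the same.
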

\begin{proof}
Suppose $O_{\ell}$ contains two elements less than $\frac{q-1}{2}$.
Without loss of generality we may assume that they are $\ell$ and
$[k\ell]$. Since $1 + k + k^2 \equiv 0 \mod q$, we have
\begin{eqnarray*}
\sin \left( \frac{2\pi[k^2\ell]}{q}\right) & = & \sin \left( \frac{2\pi[-(1+k)\ell]}{q}\right) = - \sin \left( \frac{2\pi[(1+k) \ell]}{q}\right) \\
& = & -\left[ \sin \left( \frac{2\pi \ell}{q}\right) \cos \left( \frac{2\pi[k \ell]}{q}\right) +
\cos \left( \frac{2\pi \ell}{q}\right) \sin \left( \frac{2\pi[k\ell]}{q}\right) \right].
\end{eqnarray*}
Hence
\begin{eqnarray*}
\beta_{\ell} & = & \sin \left(\frac{2\pi \ell}{q} \right) + \sin \left(\frac{2\pi [k\ell]}{q} \right) +
\sin \left(\frac{2\pi [k^2\ell]}{q} \right)\\
& = & \sin \left( \frac{2\pi \ell}{q}\right) \left[ 1 - \cos\left( \frac{2\pi[k \ell]}{q}\right)\right] + \sin \left( \frac{2\pi[k\ell]}{q}\right) \left[ 1 - \cos \left( \frac{2\pi \ell}{q} \right) \right]
\end{eqnarray*}
is positive, because both $\ell$ and $[k \ell]$ are positive and
smaller than $\frac{q-1}{2}$.

The \lq\lq only if\rq\rq $\,$ part of the assertion follows from the
fact that the elements of $O_{q - \ell}$ are the negatives$\! \mod
q$ of the numbers of $O_{\ell}$, and therefore $\beta_{q-\ell} = -
\beta_{\ell}$. This completes the proof.
\end{proof}

Let the notation be as in Section 2 with $n=3$. In particular $\{a^{i_1},
\ldots, a^{i_s}\}\; (s= \frac{q-1}{3})$ denotes a set of representatives
for the action of the group $H = \langle b \rangle$ on the group $N =
\langle a \rangle$. The corresponding orbits are $\{a^{i_j}, a^{ki_j},
a^{k^2i_j}\}$ for $j= 1, \ldots, s$. Now with $\{a^{i_j}, a^{ki_j},
a^{k^2i_j}\}$ also $\{a^{q-i_j}, a^{k(q-i_j)}, a^{k^2(q-i_j)}\}$ is an
orbit, disjoint from it. Hence we can enumerate the orbits in
the following way: The orbits of  $N$ under the adjoint action of the
group $H$ are exactly
\begin{equation} \label{eq4.1}
\{a^{i_{\nu}},a^{ki_{\nu}},a^{k^2i_{\nu}}\}_{\nu = 1}^{\frac{s}{2}}
\; \text{ and } \;
\{a^{q-i_{\nu}},a^{k(q-i_{\nu})},a^{k^2(q-i_{\nu})}\}_{\nu =
1}^{\frac{s}{2}} ,
\end{equation}
where $2$ of the numbers $i_{\nu},[ki_{\nu}],[k^2i_{\nu}]$ are less
than $\frac{q-1}{2}$ (and thus $2$ of the numbers
$q-i_{\nu},[k(q-i_{\nu})],[k^2(q-i_{\nu})]$ are $\geq
\frac{q-1}{2}$).

If $V_j$ denotes the complex irreducible representations as in
Section 2 for $j = 1, \ldots, s$, then we have

\begin{prop} \label{thm4.2}
$$
H^0(Y,\omega_Y) = \bigoplus_{\nu=1}^{\frac{s}{2}} V_{\nu} \, .
$$
\end{prop}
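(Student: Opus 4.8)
The plan is to compute $H^0(Y,\omega_Y)$ as a sub-$G$-module of $H^1(Y,\CC)$ and to pin it down by restricting the $G$-action to $N=\langle a\rangle$. From Section 3.1 we have $H^1(Y,\QQ)\cong W$, hence $H^1(Y,\CC)\cong\bigoplus_{j=1}^s V_j$; this is multiplicity free, and its $s$ summands fall into $\frac s2$ complex-conjugate pairs, since $\overline{V_j}$ is again one of the $V$'s (conjugation replaces $\zeta^{i_j}$ by $\zeta^{q-i_j}$) and $V_j\not\cong\overline{V_j}$ because $k\not\equiv-1\bmod q$. The Hodge decomposition $H^1(Y,\CC)=H^0(Y,\omega_Y)\oplus\overline{H^0(Y,\omega_Y)}$ is $G$-equivariant, because $G$ acts by rational matrices on $H^1(Y,\QQ)$ and hence commutes with complex conjugation. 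Consequently $H^0(Y,\omega_Y)=\bigoplus_{j\in S}V_j$ for a subset $S$ containing exactly one index from each conjugate pair. Moreover $V_j|_N$ has eigenvalues exactly the orbit $O_{i_j}=\{i_j,[ki_j],[k^2i_j]\}$, and distinct $j$ give distinct orbits; in particular $V_\nu|_N$ and $\overline{V_\nu}|_N$ have disjoint eigenvalue sets $O_{i_\nu}$ and $O_{q-i_\nu}$. Thus it suffices to compute the multiplicities of the $a$-eigenvalues in $H^0(Y,\omega_Y)$.

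For this I would analyze the intermediate cover $Y\to Y/N$. Since $N$ is normal of order $q$ and meets the stabilizers $\langle b\rangle$ and $\langle ab\rangle$ trivially while being the full stabilizer over $p_1$, the map $Y\to Y/N$ is cyclic of degree $q$, unramified away from the three points of $Y$ over $p_1$ and totally ramified there; the Hurwitz formula then forces $Y/N\cong\PP^1$. At the three ramification points $y,by,b^2y$ the generator $a$ acts on the tangent spaces by $\zeta_q,\zeta_q^{k},\zeta_q^{k^2}$ respectively, as one reads off from the monodromy $a$ at $p_1$ together with $bab^{-1}=a^{k^2}$, computed from $b^{-1}ab=a^k$. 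Applying the Chevalley--Weil theorem to this cyclic cover, with these rotation numbers $1,k,k^2$, yields for $1\le c\le q-1$
$$
\dim H^0(Y,\omega_Y)_{a=\zeta_q^{-c}}=-1+\left\langle\frac{c}{q}\right\rangle+\left\langle\frac{ck}{q}\right\rangle+\left\langle\frac{ck^2}{q}\right\rangle,
$$
where $\langle\,\cdot\,\rangle$ denotes the fractional part; since $1+k+k^2\equiv0\bmod q$ this dimension is always $0$ or $1$.

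Finally I would match this with the orbit combinatorics. The eigenvalue $\zeta_q^{i_\nu}$ occurs in $H^0(Y,\omega_Y)$ exactly when the displayed dimension for $c=q-i_\nu$ equals $1$, i.e. when $i_\nu+[ki_\nu]+[k^2i_\nu]=q$; by Lemma \ref{lem4.1}(2) this happens precisely when two of the three numbers in $O_{i_\nu}$ are smaller than $\frac{q-1}{2}$. Because $O_{i_\nu}$ is stable under multiplication by $k$, the three eigenvalues of $V_\nu$ are holomorphic or not simultaneously, so this condition decides the whole summand $V_\nu$; and by the enumeration \eqref{eq4.1} the representations satisfying it are exactly $V_1,\dots,V_{s/2}$. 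This gives $H^0(Y,\omega_Y)=\bigoplus_{\nu=1}^{s/2}V_\nu$. The step demanding the most care is the second one: fixing the orientation and rotation-number conventions at the ramification points and the normalization in the Chevalley--Weil formula, so that the sign in $\langle ck^j/q\rangle$ (rather than $\langle -ck^j/q\rangle$) is correct and the arithmetic condition lands on the orbits singled out in \eqref{eq4.1} instead of on their conjugates; the consistency check that all three eigenvalues of a given $V_\nu$ yield the same dimension confirms that the convention has been chosen correctly.
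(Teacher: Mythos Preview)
Your argument is correct and reaches the same conclusion via closely related but differently packaged reasoning. The paper applies the Chevalley--Weil formula directly to the $3$-dimensional representations $V_\nu$ of the full group $G=G_{q,3}$ on the cover $Y\to\PP^1$: the three fractional-part terms $\langle -i_\nu/q\rangle+\langle -[ki_\nu]/q\rangle+\langle -[k^2i_\nu]/q\rangle$ come from the single branch point $p_1$ of order $q$ (where $V_\nu(a)$ has eigenvalues $\zeta^{i_\nu},\zeta^{ki_\nu},\zeta^{k^2i_\nu}$), while the branch points $p_2,p_3$ of order $3$ each contribute $\langle -\tfrac13\rangle+\langle -\tfrac23\rangle=1$, and the total multiplicity is $-3+2+2=1$ by Lemma~\ref{lem4.1}(2). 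You instead first exploit that $H^1(Y,\CC)=\bigoplus_{j=1}^s V_j$ is multiplicity free and that the Hodge decomposition is $G$-stable, so as to reduce the problem to selecting one representation from each complex-conjugate pair; you then decide this by restricting to the cyclic subgroup $N=\langle a\rangle$ and applying Chevalley--Weil to one-dimensional characters on the degree-$q$ cyclic cover $Y\to Y/N\cong\PP^1$. The same three fractional-part terms now arise from the three ramification points of this cyclic cover (with rotation numbers $1,k,k^2$), while the ``$+2$'' coming from the order-$3$ branch points in the paper's computation has been absorbed into your preliminary structural reduction. Arithmetically the two computations are identical; your route trades a single application of Chevalley--Weil for a higher-dimensional representation against a structural argument plus the more classical abelian-cover version, which some readers may find more transparent. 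The paper's approach is a bit shorter and sidesteps the need to pin down the rotation numbers at $by$ and $b^2y$, which, as you rightly flag, is where the sign and orientation conventions must be tracked with care.
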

\begin{proof}
Since $3\dfrac{s}{2} = \dfrac{q-1}{2} = g(Y)$, it suffices to show
that every $V_{\nu}$ with $1 \leq \nu \leq \frac{s}{2}$ occurs
exactly once in the representation $H^0(Y,\omega_Y)$ of $G$.

According to the Theorem of Chevalley-Weil (see \cite{cw}), the
representation $V_{\nu}$ occurs exactly
\begin{equation} \label{eqn3.5}
N = -\deg V_{\nu} + \sum_{\mu = 1}^3 \sum_{\alpha= 0}^{n_{\mu - 1}}
N_{\mu,\alpha} \left\langle -\frac{\alpha}{n_{\mu}}\right\rangle
\end{equation}
times in the representation $H^0(Y,\omega_Y)$, where
\begin{itemize}
\item $\mu$ runs through the branch points of the covering $Y \ra \PP^1$,
\item $n_{\mu}$ is the order of the $\mu$-th branch point,
\item $N_{\mu,\alpha}$ denotes the multiplicity of the eigenvalue
    $e^{\frac{2 \pi i \alpha}{n_{\mu}}}$ in the matrix
    $V_{\nu}(g_{\mu})$, where $g_{\mu}$ is any nontrivial element of
    $G$ stabilizing a point in the fiber of $\mu$, and
\item $\langle r\rangle := r - \lfloor r \rfloor$ denotes the fractional part of the real number $r$.
\end{itemize}
Hence we have $n_1 = q,\; n_2=n_3 = 3$ and thus
\begin{eqnarray*}
N &= &-3 + \left\langle -\frac{i_{\nu}}{q}\right\rangle +
\left\langle -\frac{[ki_{\nu}]}{q}\right\rangle +
\left\langle -\frac{[k^2i_{\nu}]}{q}\right\rangle +
2\left\langle -\frac{1}{3}\right\rangle + 2\left\langle -\frac{2}{3}\right\rangle\\
&=& -1 + \frac{q-i_{\nu}}{q} + \frac{q-[ki_{\nu}]}{q} + \frac{q-[k^2i_{\nu}]}{q}\\
&=& 1,
\end{eqnarray*}
where the last equation follows from Lemma \ref{lem4.1} (2), since
by assumption, $2$ of the numbers $q-i_{\nu}, q-[ki_{\nu}],
q-[k^2i_{\nu}]$ are $\geq \frac{q-1}{2}$.
\end{proof}

\begin{cor} \label{cor4.3}
Let $Y$ and $X$ denote the curves of Section 3, and denote $\zeta =
\zeta_q = e^{\frac{2\pi i}{q}}$. Let
$\{a^{i_{\nu}},a^{ki_{\nu}},a^{k^2i_{\nu}}\}_{\nu =
1}^{\frac{s}{2}}$ be the first half of the orbits in
\eqref{eq4.1}. Then

\noindent {\em (1)} The CM-type of $JY$ is given by the following $g(Y) = \dfrac{3s}{2}$
embeddings $\varphi_j$ of $\mathbb{Q}(\zeta)$ into $\mathbb{C}$: $\varphi_j(\zeta) = \zeta^j$ for
$j$ in $\{ i_1 , k \, i_1 , k^2 \, i_1 , \ldots , i_{\frac{s}{2}} , k \, i_{\frac{s}{2}} , k^2 \, i_{\frac{s}{2}} \}$.
\\
{\em (2)} Denoting $\alpha_{{\nu}} := \zeta^{i_{\nu}} +
\zeta^{ki_{\nu}} + \zeta^{k^2i_{\nu}}$ for $\nu = 1, \ldots,
\frac{s}{2}$, the CM-type of $JX$ is given by the following $g(X) = \dfrac{s}{2}$
embeddings $\psi_{\nu}$ of $\mathbb{Q}(\alpha_1)$ into $\mathbb{C}$: $\psi_{\nu}(\alpha_1) = \alpha_{{\nu}}$.
\end{cor}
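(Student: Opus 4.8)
The plan is to read off both CM\nobreakdash-types from the Hodge\nobreakdash-theoretic eigenspace decompositions, using Proposition \ref{thm4.2} together with the standard identification $H^0(X,\omega_X) \simeq H^0(Y,\omega_Y)^H$ (holomorphic $1$\nobreakdash-forms on $X$ are exactly the $H$\nobreakdash-invariant holomorphic forms on $Y$, the isomorphism being compatible with the Hodge decomposition since $H$ acts by algebraic automorphisms). By the convention recalled before the corollary, in each case the CM\nobreakdash-type is the set of embeddings $\sigma$ of the CM\nobreakdash-field for which the $\sigma$\nobreakdash-eigenspace of $H^1$ lies in $H^0(\omega)$; so everything reduces to matching the eigenvalues of one explicit endomorphism against the prescribed embeddings.

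For part (1) I would use that the complex multiplication by $K=\QQ(\zeta)$ on $JY$ is the one coming from the subalgebra $\QQ[N]=\QQ[\langle a\rangle]\cong \QQ\times\QQ(\zeta)$ of $\QQ[G]$, under the identification $\zeta\mapsto a$ that defines the $\varphi_j$. Since $H^1(Y,\QQ)\simeq W$ has complexification $\bigoplus_{j=1}^s V_j$, on which $a$ has no eigenvalue $1$, the trivial factor $\QQ$ acts as $0$ and $\zeta\in K$ acts exactly as $a$; hence the $\varphi_j$\nobreakdash-eigenspace of $H^1(Y,\CC)$ is the $\zeta^j$\nobreakdash-eigenspace of $a$. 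By Proposition \ref{thm4.2}, $H^0(Y,\omega_Y)=\bigoplus_{\nu=1}^{s/2}V_\nu$, and $V_\nu(a)$ has eigenvalues $\zeta^{i_\nu},\zeta^{ki_\nu},\zeta^{k^2i_\nu}$. Thus $\mathbb{C}_{\varphi_j}\subset H^0(Y,\omega_Y)$ precisely when $j$ lies in $\{i_1,ki_1,k^2i_1,\dots,i_{s/2},ki_{s/2},k^2i_{s/2}\}$ mod $q$, which is the asserted CM\nobreakdash-type; that it is a genuine CM\nobreakdash-type follows from \eqref{eq4.1}, since the exponents of the second half of the orbits are the negatives mod $q$ of these.

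For part (2) the plan is to take $H$\nobreakdash-invariants in Proposition \ref{thm4.2}, giving $H^0(X,\omega_X)=H^0(Y,\omega_Y)^H=\bigoplus_{\nu=1}^{s/2}V_\nu^H$ with each summand one\nobreakdash-dimensional, and to realize the multiplication by $K=\QQ(\alpha_1)$ through the double coset algebra $\QQ[H\backslash G/H]$ acting on $H^1(X,\QQ)=\bigoplus_{j=1}^s V_j^H$. Writing $e=\tfrac13(1+b+b^2)$ for the idempotent onto $H$\nobreakdash-invariants, I would set $\theta:=3\,eae\in\QQ[H\backslash G/H]$ and compute on the generator $(1,1,1)^{t}$ of $V_j^H$ (fixed by $V_j(b)$): since $V_j(a)(1,1,1)^{t}=(\zeta^{i_j},\zeta^{ki_j},\zeta^{k^2i_j})^{t}$, the operator $\theta$ acts on $V_j^H$ as multiplication by $\alpha_j=\zeta^{i_j}+\zeta^{ki_j}+\zeta^{k^2i_j}$. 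As the $\alpha_j$ are the $s=[\QQ(\alpha_1):\QQ]$ distinct conjugates of $\alpha_1$, the operator $\theta$ generates $\QQ(\alpha_1)\subset\End_\QQ(JX)$ and corresponds to $\alpha_1$ itself, so $V_j^H$ is the $\psi$\nobreakdash-eigenspace for the embedding $\psi(\alpha_1)=\alpha_j$. Intersecting with $H^0(X,\omega_X)=\bigoplus_{\nu=1}^{s/2}V_\nu^H$ then singles out exactly the embeddings $\psi_\nu$, $\nu=1,\dots,s/2$.

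The routine parts are the two eigenvalue computations and the descent $H^0(X,\omega_X)=H^0(Y,\omega_Y)^H$. The step needing care, and the main obstacle, is in (2): verifying that the \emph{abstract} CM\nobreakdash-action of $\QQ(\alpha_1)$ on $JX$ is the one realized by the explicit Hecke operator $\theta$, i.e.\ that the inclusion $\QQ(\alpha_1)\hookrightarrow\End_\QQ(JX)$ sends $\alpha_1\mapsto\theta$, so that the geometric eigenlines $V_\nu^H$ are labelled by the arithmetically defined embeddings $\psi_\nu$. It is precisely this normalization that pins down the actual CM\nobreakdash-type rather than just a set of representatives of conjugate pairs.
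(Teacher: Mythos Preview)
Your proposal is correct and follows essentially the same approach as the paper: both read the CM-types directly off the decomposition $H^0(Y,\omega_Y)=\bigoplus_{\nu=1}^{s/2}V_\nu$ of Proposition~\ref{thm4.2}, together with the explicit eigenvalues of $a$ on $V_\nu$, and for (2) pass to $H$-invariants using that $\QQ(\zeta^{(3)})=\QQ(\alpha_1)$ is the index-$3$ subfield of $\QQ(\zeta)$.

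The paper's proof is in fact much terser than yours---only a few lines---and leaves implicit exactly the mechanism you spell out with the Hecke operator $\theta=3\,eae$. Your stated ``main obstacle'' (matching the abstract CM-action with the geometric one) is not really an obstacle here: the inclusion $\QQ(\alpha_1)\hookrightarrow\End_\QQ(JX)$ was \emph{constructed} in the proof of Proposition~\ref{prop3.1} precisely via the image of $\QQ[H\backslash G/H]$ in $\End(W^H)$, so the normalization $\alpha_1\mapsto\theta$ is the one being used by definition, and your eigenvalue computation on $V_j^H$ then labels the Hodge pieces by the $\psi_\nu$ as claimed.
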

\begin{proof}
(1) is a direct consequence of Propositions \ref{thm3.1} and
\ref{thm4.2}, and the definition of the representations $V_{\nu}$ in
Section 2.

According to Corollary \ref{cor2.2} the Jacobian $JX$ has complex
multiplication by $\QQ(\zeta^{(3)})$. Hence (2) follows from Theorem
\ref{thm4.2} and the fact that
$$
\QQ(\zeta^{(3)}) = \QQ(\alpha_1) = \QQ(\zeta + \zeta^k + \zeta^{k^2})
$$
is the only subfield of $\QQ(\zeta)$ of index 3.
\end{proof}

\begin{prop} \label{cor4.4}
The Jacobian $JX$  is a simple abelian variety, of dimension
$\frac{q-1}{6}$.
\end{prop}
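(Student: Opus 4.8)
The plan is to handle the two assertions separately. The dimension is immediate: by Proposition \ref{prop3.1} (equivalently Corollary \ref{cor2.2}) the Jacobian $JX$ has complex multiplication by $K:=\QQ(\zeta_q^{(3)})$, a CM-field with $[K:\QQ]=\frac{q-1}{3}$, so $\dim JX=\frac12[K:\QQ]=\frac{q-1}{6}$. For simplicity I would invoke the standard criterion from the theory of complex multiplication: the CM abelian variety attached to a CM-type $(K,\Phi)$ with $K/\QQ$ \emph{Galois} is simple if and only if $\Phi$ is primitive, i.e. not induced from a CM-type of a proper CM-subfield, and primitivity is equivalent to triviality of the stabilizer $\Stab(\Phi)$ of $\Phi$ inside $\Gal(K/\QQ)$ (acting by $\sigma\cdot\Phi=\{\sigma\circ\varphi\}$). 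Since $K$ is a subfield of $\QQ(\zeta_q)$ it is cyclic over $\QQ$, so the criterion applies and everything reduces to showing $\Stab(\Phi)=\{1\}$.

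Next I would make this explicit. Write $G:=\Gal(K/\QQ)\cong(\ZZ/q)^*/\langle k\rangle$, a cyclic group of order $M=\frac{q-1}{3}$, whose unique element of order two is the image $\rho$ of $-1$ (complex conjugation). By Corollary \ref{cor4.3}(2), under the orbit map $(\ZZ/q)^*\to G$ the CM-type $\Phi$ corresponds to the set $P:=\{\ell:\beta_\ell>0\}$ of residues; by Lemma \ref{lem3.8} together with Lemma \ref{lem4.1}(2) this has the clean description $P=\{\ell:[\ell]+[k\ell]+[k^2\ell]=q\}$, with complement $-P$, so that $\mathbf{1}_P=2-\tfrac1q\,s$ where $s(\ell):=[\ell]+[k\ell]+[k^2\ell]\in\{q,2q\}$. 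A class of $c\in(\ZZ/q)^*$ lies in $\Stab(\Phi)$ exactly when $cP=P$, i.e. $s(c\ell)=s(\ell)$ for all $\ell$. As $P$ is a union of $\langle k\rangle$-orbits, the image of $\langle k\rangle$ is the identity of $G$ and imposes no condition; the content is that no $c\notin\langle k\rangle$ preserves $P$.

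The key step, which I expect to be the main obstacle, is to prove $\Stab(\Phi)=\{1\}$ by Fourier analysis on $G$. Set $g:=\mathbf{1}_P-\tfrac12$, an odd and $\langle k\rangle$-invariant function, which I regard as a function on $G$, and expand $g=\sum_\chi\hat g(\chi)\chi$ over the characters $\chi$ of $G$ (equivalently Dirichlet characters mod $q$ trivial on $\langle k\rangle$). Translation-invariance then gives $\Stab(\Phi)=\bigcap_{\hat g(\chi)\neq0}\Ker\chi$, so it suffices to produce enough nonvanishing $\hat g(\chi)$. Using $\chi(k)=1$ and the substitution $\ell\mapsto k\ell$, a direct computation yields, for every nontrivial $\chi$,
\[
\textstyle\sum_\ell \mathbf{1}_P(\ell)\,\overline{\chi}(\ell)=-\tfrac1q\sum_\ell s(\ell)\,\overline{\chi}(\ell)=-\tfrac3q\sum_{\ell=1}^{q-1}\ell\,\overline{\chi}(\ell)=-3\,B_{1,\overline{\chi}},
\]
the generalized Bernoulli number. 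For every \emph{odd} $\chi$ one has $B_{1,\overline{\chi}}\neq0$, since $L(1,\overline{\chi})$ is a nonzero multiple of $B_{1,\overline{\chi}}$ for odd $\overline{\chi}$ and $L(1,\overline{\chi})\neq0$ by Dirichlet. Because the odd characters of the cyclic group $G$ contain a generator of $\widehat{G}$, their kernels intersect in $\{1\}$, so $\Stab(\Phi)\subseteq\bigcap_{\chi\ \mathrm{odd}}\Ker\chi=\{1\}$. This shows $\Phi$ is primitive and hence $JX$ is simple. The delicate points are the bookkeeping in the Fourier computation (the factor $1+\chi(k)+\chi(k^2)=3$ and the identification with $B_{1,\overline{\chi}}$) and the appeal to $L(1,\overline{\chi})\neq0$, which is where the genuine arithmetic input enters.
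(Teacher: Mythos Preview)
Your argument is correct and complete. The dimension is immediate from complex multiplication, and your Fourier-analytic computation is clean: for nontrivial $\chi$ trivial on $\langle k\rangle$ you correctly reduce $\sum_\ell \mathbf{1}_P(\ell)\overline\chi(\ell)$ to $-3B_{1,\overline\chi}$, which is nonzero for odd $\chi$ by the nonvanishing of $L(1,\overline\chi)$. Since $M=(q-1)/3$ is even (as $q\equiv 1\bmod 6$) every faithful character of the cyclic group $G$ is odd, so $\bigcap_{\chi\ \mathrm{odd}}\Ker\chi=\{1\}$ and $\Stab(\Phi)$ is trivial; primitivity and hence simplicity follow.

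The paper takes a different route. It first observes (via Lemma~\ref{lem3.8}) that the CM-type has the special form $\Phi=\{\varphi:\operatorname{Im}\varphi(\mu)>0\}$ for the purely imaginary element $\mu=\alpha-\overline\alpha$, and then invokes the Shimura--Taniyama criterion \cite[Prop.~27]{st} for primitivity of such ``sign'' CM-types: one must check that $K_0(\mu)=\QQ(\mu)$ and that no ratio $\mu'/\mu$ with $\mu'\neq\mu$ a conjugate of $\mu$ is totally positive in $K_0$. Unwinding, condition~(ii) is \emph{equivalent} to your statement $\Stab(\Phi)=\{1\}$ (since the real embeddings send $\sigma_c(\mu)/\mu$ to $\beta_{c\ell}/\beta_\ell$, so total positivity means $cP=P$), and the paper disposes of it in one line by noting the values $\beta_\ell/\beta_1$ that arise. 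Your approach trades the black-box criterion for an explicit harmonic-analysis computation, at the cost of importing the genuinely arithmetic fact $L(1,\chi)\neq 0$; it has the merit of making transparent exactly where the nontrivial input enters, and of giving a self-contained verification of the step the paper treats most tersely.
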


This proves part (4) of Theorem 1 in the Introduction.

\begin{proof}
Let $\Phi$ denote the CM-type of $JX$, i.e. of the field
$\QQ(\zeta^{(3)})$ as given in Corollary \ref{cor4.3} (2). With
$\alpha = \zeta + \zeta^k + \zeta^{k^2}$ as above and $\mu := \alpha
- \overline{\alpha}$ we have
\begin{itemize}
\item $K_0 = \QQ(\alpha + \overline{\alpha})$ is totally real;
\item $\; \eta := - \mu^2 = 4 \beta_1$ is a totally positive element of $K_0$
(by Lemma \ref{lem3.8});
\item The elements of $\Phi$ are exactly the embeddings $\varphi:
    \QQ(\mu) \hookrightarrow \CC$ for which the imaginary part of
    $\varphi(\mu)$ is positive (according to Lemma \ref{lem3.8}).
\end{itemize}

We have to show that $\Phi$ is a primitive CM-type. For this we
apply the criterion \cite[Prop.27]{st} of Shimura-Taniyama which says the following:
$\Phi$ is primitive if and only if the following two conditions are satisfied:\\
(i) $K_0(\mu) = \QQ(\mu)$;\\
(ii) for any conjugate $\alpha'$ of $\alpha$ over $\QQ$, other than $\alpha$ itself, $\frac{\alpha'}{\alpha}$
is not totally positive.

The first condition holds trivially, since both fields are equal to
$\QQ(\zeta^{(3)})= \QQ(\alpha)$. As for the second condition: for any
conjugate $\mu'$ of $\mu$ over $\QQ$ different from $\mu$, $\;
\frac{\mu'}{\mu}$ is not totally positive, because $\frac{\mu'}{\mu}$ runs
over $\frac{\beta_{\ell}}{\beta_1}$.
\end{proof}

\subsection{The function field $\mathcal{K}(Y)$}

In this section we use Kummer theory in order to prove part (5) of Theorem
1 in the Introduction.

Let $Y$ be the Galois covering of $\PP^1$ of Corollary \ref{cor2.2},
with Galois group $G_{q,3}$ ramified over the points $p_1,\; p_2$
and $p_3$ of $\PP^1$ in affine coordinates. The subgroup $N =
\langle a \rangle$ is normal of index $3$ in $G_{q,3}$, and gives a
factorization of the covering $Y \ra \PP^1$ into cyclic coverings $Y
\ra Z := Y/N$ of degree $q$ and $Z \ra \PP^1$ of degree $3$. The
last covering is ramified over $p_2$ and $p_3$. Hence, according to the
Hurwitz formula,
$$
g(Z) = 0.
$$
We choose an affine coordinate $x$ of $\PP^1$ in such a way that
$p_1 =1, p_2 = 0$ and $p_3 = 2$. Then the covering $Z \ra \PP^1$ is
given by the equation
$$
z^3 = \frac{x}{x-2}
$$
and the function field of $Z = \PP^1$ is $\mathcal{K}(z)$.

\begin{prop}
Let $\omega_3$ denote a primitive third root of unity, and choose $1
< k < q$ such that $k^3 \equiv 1 \mod q$. A (singular) model of the
curve $Y$ is given by the equation
$$
y^q = (z-1)(z-\omega_3)^k(z-\omega_3^2)^{k^2}.
$$
With these notations,  automorphisms $\sigma$ and $\tau$ of the curve $Y$
of corresponding orders $q$ and $3$,  are  given by
$$
\sigma: \; z \mapsto z, \quad y \mapsto \zeta_q y \, , \; \text{
and}
$$
$$
\tau: \;  z \mapsto \omega_3 z, \quad y \mapsto
\frac{\omega_3^{m'}}{(z-\omega_3^2)^m} y^k \,
$$
where $m$ and $m'$ are given by $k^3 = mq+1$ and $k^2 + k + 1 =
m'q$.
\end{prop}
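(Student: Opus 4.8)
The plan is to realise $\mathcal{K}(Y)$ as a Kummer extension of $\mathcal{K}(Z) = \mathcal{K}(z)$ and to fix the defining function by combining a Riemann–Hurwitz count with the equivariance of the construction under the order-$3$ deck group of $Z \ra \PP^1$. First I would set up the Kummer description. The intermediate cover $Y \ra Z$ is cyclic Galois of degree $q$ with group $N = \langle a\rangle$, and $Z = \PP^1_z$ has genus $0$; since $\mathcal{K}$ is algebraically closed of characteristic $0$ it contains $\zeta_q$, so $\mathcal{K}(Y) = \mathcal{K}(z)(y)$ with $y^q = f(z)$ for some $f \in \mathcal{K}(z)^\ast$, and I may normalise the generator $\sigma = a$ of $N$ to act by $\sigma\colon y \mapsto \zeta_q y$ (the first automorphism). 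To locate the zeros and poles of $f$ I would determine the branch locus of $Y \ra Z$: the inertia groups over $p_1,p_2,p_3$ are $\langle a\rangle,\langle b\rangle,\langle ab\rangle$, and since $\langle b\rangle$ and $\langle ab\rangle$ meet $N$ trivially while $\langle a\rangle = N$, the cover $Y \ra Z$ is unramified over the points of $Z$ above $p_2,p_3$ (namely $z=0$ and $z=\infty$) and totally ramified over the three points above $p_1$. With $z$ normalised so that the fibre over $p_1$ is $\{1,\omega_3,\omega_3^2\}$ (care is needed here, since $z^3 = x/(x-2)$ puts this fibre at the cube roots of $-1$; a sign change in $z$ fixes this), Riemann–Hurwitz $2g(Y)-2 = q(2g(Z)-2) + (\text{ramification})$ with $g(Y)=\tfrac{q-1}{2}$, $g(Z)=0$ forces exactly three totally ramified points. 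Hence $f = c\,(z-1)^{e_1}(z-\omega_3)^{e_2}(z-\omega_3^2)^{e_3}$ with each $e_i \not\equiv 0 \bmod q$ and $e_1+e_2+e_3 \equiv 0 \bmod q$ (no ramification at $z=\infty$); the constant $c$ is a $q$-th power, so rescaling $y$ removes it.

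Next I would pin down the exponents $e_i$ using the order-$3$ automorphism. The map $z \mapsto \omega_3 z$ generates $\Gal(Z/\PP^1)$ and lifts to an order-$3$ element $\tau$ of $G$ conjugate to $b$; the defining relation $b^{-1}ab = a^k$ then gives $\sigma\tau = \tau\sigma^{k}$. Consequently $\sigma(\tau(y)) = \tau(\sigma^k(y)) = \zeta_q^{k}\tau(y)$, so $\tau(y)$ is a $\zeta_q^{k}$-eigenfunction for $\sigma$ and must have the form $\tau(y) = h(z)\,y^{k}$ with $h \in \mathcal{K}(z)$; this already forces the exponent $k$ in the second automorphism. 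Comparing $\tau(y)^q = f(\omega_3 z)$ with $h(z)^q y^{kq} = h(z)^q f(z)^k$ yields the key identity $h(z)^q = f(\omega_3 z)/f(z)^k$, and demanding that the right-hand side be a genuine $q$-th power in $\mathcal{K}(z)^\ast$ forces $e_2 \equiv k e_1$, $e_3 \equiv k e_2$, $e_1 \equiv k e_3 \pmod q$; after normalisation these give $(e_1,e_2,e_3) = (1,k,k^2)$, i.e.
$$
y^q = (z-1)(z-\omega_3)^k(z-\omega_3^2)^{k^2}.
$$

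Finally I would make $h(z)$ explicit and check the orders. Using the shift identities $\omega_3 z - 1 = \omega_3(z-\omega_3^2)$, $\omega_3 z - \omega_3 = \omega_3(z-1)$, $\omega_3 z - \omega_3^2 = \omega_3(z-\omega_3)$, together with $1+k+k^2 = m'q$, $k^3 = mq+1$, and the hypothesis $q\equiv 1 \bmod 3$ (so that $\omega_3^{m'q} = \omega_3^{m'}$), the identity collapses to $h(z)^q = \omega_3^{m'}(z-\omega_3^2)^{-mq}$, whence $h(z) = \omega_3^{m'}(z-\omega_3^2)^{-m}$ and $\tau\colon y \mapsto \frac{\omega_3^{m'}}{(z-\omega_3^2)^m}\,y^k$, as claimed. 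The orders follow directly: $\sigma^q = \mathrm{id}$ is immediate, and $\tau^3 = \mathrm{id}$ comes out by iterating, using $y^{k^3} = y\cdot f(z)^m$ and the defining relations for $m,m'$.

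I expect the main obstacle to be the bookkeeping that makes $\tau$ a genuine automorphism of $Y$ of exact order $3$: one must confirm that $f(\omega_3 z)/f(z)^k$ is an actual $q$-th power in $\mathcal{K}(z)$ (not merely so up to a root of unity), which is precisely where $q \equiv 1 \bmod 3$ and the exact constant $\omega_3^{m'}$ intervene, and one must fix the normalisation of $z$ so that the branch fibre of $Y \ra Z$ sits at the cube roots of unity. The Kummer-theoretic skeleton and the Riemann–Hurwitz count are routine; the real content lies in matching the group relation $b^{-1}ab=a^k$ to the geometric progression $1,k,k^2$ of exponents and in tracking the cube roots of unity correctly.
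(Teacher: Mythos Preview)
Your approach is correct and matches the paper's: both use Kummer theory for the cyclic cover $Y\to Z$ and then verify that $z\mapsto\omega_3 z$ lifts via the computation $f(\omega_3 z)=\omega_3^{1+k+k^2}(z-\omega_3^2)^{1-k^3}f(z)^k$, which is exactly the paper's one-line check $\tau(F)=\bigl(\omega_3^{m'}(z-\omega_3^2)^{-m}y^k\bigr)^q$. You supply more than the paper does---deriving the branch locus from the inertia groups, forcing the exponent pattern $(1,k,k^2)$ from the eigenfunction condition $\sigma(\tau y)=\zeta_q^{\,k}\tau y$, and (correctly) flagging that $z^3=x/(x-2)$ actually places the fibre over $p_1=1$ at the cube roots of $-1$ rather than of $1$---but the underlying argument is the same.
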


An immediate consequence of the proposition is statement (5) of Theorem 1
in the Introduction.

\begin{proof}
The covering $Y \ra Z$ is ramified exactly over the points $1,
\omega$ and $\omega^2$ of $Z = \PP^1$. According to Kummer theory
the covering $Y \ra Z$ is given by the affine equation
\begin{equation} \label{eq5.1}
y^q = (z-1)(z- \omega_3)^{k}(z- \omega_3^2)^{k^2},
\end{equation}
with $k$ as in the statement of the proposition.

The automorphism $\tau: Z \ra Z$ given by $z \mapsto \omega_3 z$ extends to
the automorphism of $Y$ as defined in the proposition since, denoting the
right hand side of \eqref{eq5.1} by $F = F(z)$, we have
$$
\tau(F) = \omega_3^{1+k + k^2}(z-\omega_3^2)(z- 1)^{k}(z- \omega_3)^{k^2}
= \frac{\omega_3^{1+k+k^2}}{(z-\omega_3^2)^{mq}} F^k
= \left( \frac{\omega_3^{m'}}{(z - \omega_3^2)^m} y^k \right)^q.
$$
If $\sigma$ denotes the automorphism of $Y$ given above, then it is clear
that $Y \ra \PP^1$ is a Galois covering with Galois group $< \sigma, \tau
>\; = G_{q,3}$.
\end{proof}

\section{Curves with Galois group $G_m$}

\subsection{Complex multiplication of the curves}

Let $Y \ra \PP^1$ denote a Galois covering with Galois group
$$
G_m = \langle a, b \;|\; a^{2^m}= b^2=1, bab= a^{d} \rangle
$$
for $m \geq 3$ and where $d = 2^{m-1}-1$, branched over $3$ points
in $\PP^1$ with monodromy $a, b$ and $(ab)^{-1}$. Notice that such a
covering exists, since $ab(ab)^{-1} =1$. In fact, for any $m \geq 3$
there is exactly one such curve up to isomorphism. From Proposition
\ref{prop2.2} we find  the genus of $Y$,
$$
g(Y) = 2^{m-2}.
$$
Consider the curve
$$
X := Y/H
$$
where $H$ denotes the subgroup generated by $b$. We want to show
that $X$ admits complex multiplication.

As in Section 2.3 let $\xi = \xi_{2^m}$ denote a primitive $2^m$-th
root of unity. The complex  representation $V = V_1$ has character field
$K_V = \QQ(\xi + \xi^d)$, and its Schur index is equal to $1$. The
field $K_V$ is of CM-type and degree $[K_V:\QQ] = 2^{m-2}$.

\begin{prop} \label{prop4.1}
The curve $X$ has complex multiplication by $K_V$. In particular,
$$g(X) = 2^{m-3}.
$$
\end{prop}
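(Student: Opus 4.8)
The plan is to run the argument of Proposition~\ref{prop3.1} with the roles of $W$ and $\QQ(\zeta_q^{(3)})$ played by the rational irreducible representation $W_1$ of $G_m$ and by the field $K_V = \End_{\QQ[G_m]}(W_1)$. Concretely, the target is to exhibit $H^1(X,\QQ)$ as a one-dimensional $K_V$-vector space, since this is exactly the condition for $JX$ to admit complex multiplication by $K_V$; the genus statement $g(X) = 2^{m-3}$ will then drop out as $\frac12\dim_\QQ H^1(X,\QQ)$.

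First I would use Proposition~\ref{prop2.2}, which gives $\chi_Y = \chi_{W_1}$. As a representation in characteristic zero is determined up to isomorphism by its character, this upgrades to an isomorphism of $\QQ[G_m]$-modules $H^1(Y,\QQ)\simeq W_1$. Passing to the quotient $X = Y/H$ I then have $H^1(X,\QQ)\simeq H^1(Y,\QQ)^H \simeq W_1^H$, exactly as in \eqref{eq2.2}.

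Next come the two numerical inputs. Every Galois conjugate $V_1^j$ has $V_1^j(b) = \left(\begin{smallmatrix}0&1\\1&0\end{smallmatrix}\right)$, since conjugation alters only the diagonal matrix $V_1(a)$; hence each $V_1^j$ has a one-dimensional $b$-fixed subspace, and because $W_1\otimes_\QQ\CC\simeq\bigoplus_{j=1}^{2^{m-2}}V_1^j$ this gives $\dim_\QQ W_1^H = 2^{m-2}$ and therefore $g(X)=2^{m-3}$. On the other hand, $V_1$ has Schur index $1$ and character field $K_V$, so $\End_{\QQ[G_m]}(W_1)=K_V$ is a field with $[K_V:\QQ]=2^{m-2}$. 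The decomposition $\chi_H = \chi_0+\chi_2+\sum_{i=1}^{m-1}\chi_{W_i}$ shows that $W_1$ occurs exactly once in the permutation module $\QQ[G_m/H]$, whence Frobenius reciprocity gives $W_1^H\simeq\Hom_{G_m}(\QQ[G_m/H],W_1)=\End_{\QQ[G_m]}(W_1)=K_V$, a $K_V$-module of dimension one. As in Proposition~\ref{prop3.1}, the canonical map $\QQ[H\backslash G_m/H]\to\End_\QQ(JX)$ realizes the Hecke-algebra action on $H^1(X,\QQ)\simeq W_1^H$; this action factors through the single block $\End_{\QQ[G_m]}(W_1)=K_V$ and is faithful because $K_V$ is a field and $W_1^H\neq 0$. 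Thus $K_V\subset\End_\QQ(JX)$ with $[K_V:\QQ]=2^{m-2}=2\dim JX$, which is complex multiplication by $K_V$.

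The step I expect to be the crux is the last one: guaranteeing that $K_V$ embeds into $\End_\QQ(JX)$ itself rather than merely into $\End_\QQ\big(H^1(X,\QQ)\big)$, and that $H^1(X,\QQ)$ has $K_V$-dimension exactly one. The embedding is furnished by the double-coset-algebra formalism already invoked in Proposition~\ref{prop3.1}, while the dimension-one assertion rests on the numerical coincidence $\dim_\QQ W_1^H=[K_V:\QQ]$. It is the combination of Schur index $1$ with $\dim V_1^H=1$ that forces this coincidence and thereby pins the endomorphism algebra down to the full CM-field $K_V$ rather than a proper subfield.
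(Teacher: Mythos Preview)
Your proposal is correct and follows essentially the same route as the paper's proof: both use Proposition~\ref{prop2.2} to identify $H^1(X,\QQ)\simeq W_1^H$, compute $\dim W_1^H=2^{m-2}$ from $\dim V_1^H=1$, and then push the Hecke algebra $\QQ[H\backslash G_m/H]$ into $\End_\QQ(JX)$ to extract the copy of $K_V$. The only cosmetic difference is that you phrase the identification of the $K_V$-block via Frobenius reciprocity and $\End_{\QQ[G_m]}(W_1)=K_V$, whereas the paper quotes the structure $\QQ[H\backslash G/H]\cong \QQ\oplus K_V$ from \cite{cr} and the $2\times 2$ matrix-algebra description of the image of $\QQ[G_m]$ in $\End(W_1)$.
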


\begin{proof}
As in Section 2.3, let $W_1$ denote the rational irreducible
representation whose complexification is $\oplus_{j=1}^{2^{m-2}}
V_1^j$. According to \cite[p.202, Corollaire]{g} and Proposition
\ref{prop2.2} above, there is an isomorphism of $\QQ[H\backslash
G/H]$-modules
\begin{equation} \label{eqn4.1}
H^1(X,\QQ) \simeq W_1^H.
\end{equation}
Since $\dim V_1^H = 1$, this implies $\dim W_1^H = 2^{m-2}$ and thus
$g(X) = 2^{m-3}$. Moreover \eqref{eqn4.1} implies that the canonical
map $\QQ[H \backslash G/H] \ra \End_{\QQ}(JX)$ induces a homomorphism
$$
p: \QQ[H \backslash G/H] \ra \End (W_1^H).
$$
whose image is isomorphic to the image of $\QQ[H \backslash G/H]$ in
$\End_{\QQ}(JX)$. Now the image of $\QQ[G]$ in $\End(W_1)$ is isomorphic
to the $2 \times 2$-matrix algebra over $K_V$, and $\QQ[H \backslash G/H]$
is isomorphic to $\mathbb{Q} \oplus K_V$ (since $\dim V^H = 1$).

Hence the image of  $\QQ[H \backslash G/H]$ in $\End(W_1^H)$ is isomorphic
to $K_V$, which means that the curve $X$ admits complex multiplication by $K_V$.
\end{proof}
This proves (2) of Theorem 2 of the Introduction. Part (4) of
Theorem 2 is proven by the following proposition.

\begin{prop} \label{prop4.2}
The Jacobian of $Y$ is isogenous to the second power of the Jacobian of X:
$$
JY \sim JX^2.
$$
In particular $JY$ admits complex multiplication by the cyclotomic field $\QQ(\xi_{2^m})$.
\end{prop}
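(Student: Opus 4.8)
The plan is to follow the proof of Proposition \ref{thm3.1} almost verbatim, the situation here being in fact cleaner. The starting point is Proposition \ref{prop2.2}, which gives the exact character identity $\chi_Y = \chi_{W_1}$, where $W_1$ is the rational irreducible representation whose complexification is $\oplus_{j=1}^{2^{m-2}} V_1^j$. First I would upgrade this to an isomorphism of $\QQ[G_m]$-modules $H^1(Y,\QQ) \simeq W_1$ by \cite[p.202, Corollaire]{g}, exactly as was done for $G_{q,3}$. In contrast to the earlier proof, no rational irreducible other than $W_1$ occurs in $H^1(Y,\QQ)$, so there will be no auxiliary abelian subvariety whose dimension must be shown to vanish.

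Next I would invoke the isotypical decomposition \cite[Proposition 5.2]{cr}. It produces a single abelian subvariety $B \subset JY$, unique up to isogeny and attached to $W_1$, together with isogenies
$$JY \sim B^{\dim V_1/m} \quad \text{and} \quad JX \sim B^{\dim V_1^H/m},$$
where $m$ denotes the Schur index of the complex irreducible $V_1 = V$. Since $V_1$ has Schur index $1$ while $\dim V_1 = 2$ and $\dim V_1^H = 1$, these read $JY \sim B^2$ and $JX \sim B$. Combining them yields $JY \sim B^2 \sim JX^2$, which is the first assertion; a sanity check is that $\dim JY = 2^{m-2} = 2\cdot 2^{m-3} = 2\dim JX$, consistent with Proposition \ref{prop4.1}.

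For the final clause I would argue as at the close of Proposition \ref{thm3.1}. By Proposition \ref{prop4.1} the field $K_V = \QQ(\xi + \xi^d)$ embeds into $\End_\QQ(JX)$ with $[K_V:\QQ] = 2\dim JX$. Since $\xi^{2^{m-1}} = -1$ gives $\xi^d = -\overline{\xi}$, one has $K_V = \QQ(\xi - \overline{\xi}) \subset \QQ(\xi_{2^m})$ with $[\QQ(\xi_{2^m}):K_V] = 2$, so that $\QQ(\xi_{2^m})$ is a degree-two CM-extension of $K_V$. Then $\End_\QQ(JY) \cong \End_\QQ(JX^2) = M_2(\End_\QQ(JX)) \supseteq M_2(K_V)$, and $M_2(K_V)$ contains the quadratic extension $\QQ(\xi_{2^m})$ as a maximal subfield; as $[\QQ(\xi_{2^m}):\QQ] = 2^{m-1} = 2\dim JY$ and $\QQ(\xi_{2^m})$ is a CM-field, $JY$ acquires complex multiplication by $\QQ(\xi_{2^m})$.

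The only genuinely delicate point—and the step I would watch most carefully—is this last bookkeeping: I must verify the embedding of the quadratic CM-extension $\QQ(\xi_{2^m})/K_V$ into $M_2(K_V)$ (rather than merely matching field degrees), and I must do so using only that $K_V \subset \End_\QQ(JX)$, i.e. the defining CM property from Proposition \ref{prop4.1}, without appealing to the simplicity of $JX$, since simplicity (part (4) of Theorem 2) is established only afterwards and would otherwise risk a circular argument.
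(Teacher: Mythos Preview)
Your proposal is correct and follows essentially the same route as the paper: invoke Proposition~\ref{prop2.2} to see that only $W_1$ contributes, apply \cite[Proposition~5.2]{cr} with $\dim V_1=2$, $\dim V_1^H=1$, and Schur index $1$ to get $JY\sim B^2$ and $JX\sim B$, and then deduce CM by $\QQ(\xi_{2^m})$ from the index-$2$ inclusion $K_V\subset\QQ(\xi_{2^m})$. Your final paragraph spells out the embedding $\QQ(\xi_{2^m})\hookrightarrow M_2(K_V)$ more carefully than the paper (which simply asserts the conclusion), and your caution about not invoking simplicity of $JX$ is well placed and correctly handled.
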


\begin{proof}
We already know from Proposition \ref{prop2.2}  that there is only one
nontrivial isogeny factor of $JY$: it is associated to the representation
$W_1$. Using \cite[Proposition 5.2]{cr} this means that there is an
abelian subvariety $B_1$ of $JY$ such that
$$
JY \sim B_1^2,
$$
using that $\dim V_1 = 2$ and the Schur index of $V_1$ is 1. Since
$\dim V_1^H = 1$ we get moreover from \cite{cr} that
$$
JX \sim B_1.
$$
The two isogenies together imply the first assertion. Since $JX$
admits complex multiplication by a CM-subfield of index $2$ in
$\QQ(\xi_{2^m})$, $JY \sim JX^2$ admits complex multiplication by
$\QQ(\xi_{2^m})$.
\end{proof}

The projection map $\pi_b: Y \ra X$ is a double covering ramified at
two points. Hence its Prym variety $P = \ker (JY \ra JX)_0$ is a
principally polarized abelian variety, of dimension equal to $g(X) =
2^{m-3}$.

\begin{cor}
The Prym variety of the covering $\pi_b:Y \ra X$ has complex
multiplication by $K_V$.
\end{cor}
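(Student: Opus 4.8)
The plan is to reduce the statement to the two facts already established in this section: that $JX$ has complex multiplication by $K_V$ (Proposition~\ref{prop4.1}), and that $JY \sim JX^2$ (Proposition~\ref{prop4.2}). The guiding principle is that admitting complex multiplication by a fixed CM-field is an isogeny invariant, since any isogeny $A \ra B$ induces an isomorphism $\End_{\QQ}(A) \cong \End_{\QQ}(B)$ of rational endomorphism algebras. So it suffices to show that $P$ is isogenous to $JX$ and then transport the embedding of $K_V$.

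To obtain $P \sim JX$, I would invoke the standard decomposition attached to the double covering $\pi_b \colon Y \ra X$: the norm map and the pullback $\pi_b^*$ together give an isogeny $JY \sim JX \times P$, where $P = \ker(JY \ra JX)_0$. Comparing with $JY \sim JX^2$ from Proposition~\ref{prop4.2} yields $JX \times P \sim JX^2$, hence $P \sim JX$. Even more directly, this can be read off from the proof of Proposition~\ref{prop4.2} itself: there one has $JY \sim B_1^2$ with $JX \sim B_1$, where $B_1$ is the unique nontrivial isotypic factor of $JY$, associated to the rational irreducible representation $W_1$; since $P$ is the complementary abelian subvariety to $\pi_b^*(JX)$ inside $JY$, it is isogenous to the remaining copy of $B_1$, so $P \sim B_1 \sim JX$. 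This is consistent with dimensions, as $\dim P = g(X) = 2^{m-3} = \dim JX$.

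It then remains only to conclude. Since $P \sim JX$ and $K_V \hra \End_{\QQ}(JX)$, the induced isomorphism $\End_{\QQ}(P) \cong \End_{\QQ}(JX)$ furnishes an embedding $K_V \hra \End_{\QQ}(P)$; and because $[K_V : \QQ] = 2^{m-2} = 2 \cdot 2^{m-3} = 2 \dim P$, this embedding is precisely a complex multiplication of $P$ by $K_V$. The one step deserving a word of care is the cancellation $JX \times P \sim JX^2 \Rightarrow P \sim JX$, which for abelian varieties up to isogeny is legitimate: by Poincar\'e's complete reducibility theorem the isogeny category is semisimple, so decomposition into isogeny-simple factors is unique and cancellation holds. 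Thus there is no real obstacle, and the corollary follows at once.
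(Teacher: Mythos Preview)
Your argument is correct and is essentially the same as the paper's: the paper also deduces the corollary immediately from Propositions~\ref{prop4.1} and~\ref{prop4.2} together with the isogeny $JY \sim JX \times P$. You are a bit more explicit in justifying the cancellation $JX \times P \sim JX^2 \Rightarrow P \sim JX$ via Poincar\'e complete reducibility, which the paper leaves implicit.
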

\begin{proof}
This follows immediately from Propositions \ref{prop4.1} and
\ref{prop4.2}, and the fact that $JY$ is isogenous to the product
$JX \times P$.
\end{proof}

\subsection{The CM-types of $JY$ and $JX$}

In order to determine the CM-types of $JY$ and $JX$, recall that,
according to Proposition \ref{prop2.2}, the representation
$H^1(Y,\QQ)$ of $G_m$ is just the rational irreducible
representation $W_1$. Moreover, the complexification of $W_1$ is the
direct sum of the $[K_V:\QQ] = 2^{m-2}$ Galois conjugate
representations $V_1^j$ of the complex irreducible representation
$V_1$. For every positive integer $i$ consider the complex
irreducible representation $U_i$ defined by
$$
U_i(a) = \left(
      \begin{array}{cc}
        \xi^{2i-1} & 0 \\
        0 & \xi^{(2i-1)d} \\
      \end{array}
    \right),
\qquad \ U_i(b) = \left(
      \begin{array}{cc}
        0 & 1 \\
        1 & 0 \\
      \end{array}
    \right).
$$
and denote
$$
 U_i' := U_{2^{m-2} + i}.
$$
\begin{lem} \label{lem4.3}
The complex irreducible representations $V_1^j,\; j=1, \ldots ,
2^{m-2}$ are given by the representations $U_1, \ldots, U_{2^{m-3}}$
and  $U_1', \ldots, U_{2^{m-3}}'$. The representation $U_i'$ is the
complex conjugate of $U_i$ for $i = 1, \ldots , 2^{m-3}$.
\end{lem}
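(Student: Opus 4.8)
The plan is to realize every Galois conjugate $V_1^j$ as the twist of $V_1$ by a field automorphism of $\QQ(\xi)$, and then to match these twists with the $U_i$ and $U_i'$ by a single congruence. Write $\Gal(\QQ(\xi)/\QQ) = \{\sigma_c : \xi \mapsto \xi^c,\; c \text{ odd},\; 1 \le c < 2^m\}$. For odd $c$ the twist $\sigma_c(V_1)$ sends $a$ to the diagonal matrix with entries $\xi^c, \xi^{cd}$ and $b$ to the swap matrix; since $d^2 \equiv 1 \mod 2^m$ this is again a representation of $G_m$, and it is irreducible because the eigenlines of $V(a)$, corresponding to the distinct characters $\xi^c \neq \xi^{cd}$, are interchanged by $V(b)$. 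Two such twists are equivalent precisely when they share a character, i.e. when $\{c, cd\} = \{c', c'd\} \mod 2^m$; thus the equivalence classes of the $\sigma_c(V_1)$, which are exactly the $2^{m-2}$ conjugates $V_1^j$, are indexed by the orbits of multiplication by $d$ on the odd residues mod $2^m$. First I would record that $U_i = \sigma_{2i-1}(V_1)$ and $U_i' = U_{2^{m-2}+i} = \sigma_{2^{m-1}+2i-1}(V_1)$, so that the whole lemma reduces to a statement about these orbits.

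The computational heart is the congruence: for any odd $c$,
\[
cd = c(2^{m-1}-1) \equiv 2^{m-1}-c \mod 2^m ,
\]
which holds because $c \cdot 2^{m-1} \equiv 2^{m-1} \mod 2^m$ for $c$ odd. Hence the $d$-orbit of an odd residue $c$ is $\{c,\; 2^{m-1}-c\}$ (reduced mod $2^m$), and it has exactly two elements for $m \ge 3$, since a fixed point would force $2c \equiv 2^{m-1}$, impossible for odd $c$. This congruence immediately shows that the orbits respect the splitting of the odd residues into the two halves $\{1,3,\dots,2^{m-1}-1\}$ and $\{2^{m-1}+1,\dots,2^m-1\}$: on the first half the pairing is $2i-1 \leftrightarrow 2^{m-1}-(2i-1)$, and likewise on the second. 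Consequently $U_1,\dots,U_{2^{m-3}}$ are representatives for the $2^{m-3}$ orbits in the first half and $U_1',\dots,U_{2^{m-3}}'$ for the $2^{m-3}$ orbits in the second; together these are $2^{m-2}$ pairwise inequivalent representations, and since there are exactly $2^{m-2}$ conjugates $V_1^j$ they must be precisely $V_1^1,\dots,V_1^{2^{m-2}}$.

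For the final assertion I would compute the complex conjugate directly: $\overline{U_i}$ fixes the swap matrix for $b$ and sends $a$ to the diagonal matrix with entries $\xi^{-(2i-1)}$ and $\xi^{-(2i-1)d}$, so its class is the $d$-orbit of $-(2i-1) \equiv 2^m-(2i-1) \mod 2^m$. Applying the displayed congruence to $c = 2^{m-1}+2i-1$ (the residue defining $U_i'$) gives that the orbit of $U_i'$ is $\{2^{m-1}+2i-1,\; -(2i-1)\} \mod 2^m$, which contains $-(2i-1)$. Thus $\overline{U_i}$ and $U_i'$ have the same $d$-orbit and are therefore equivalent, giving $\overline{U_i} \cong U_i'$ for $i=1,\dots,2^{m-3}$.

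I expect the only real obstacle to be the bookkeeping: confirming that the two halves of the odd residues are each stable under multiplication by $d$, and that $U_1,\dots,U_{2^{m-3}}$ (respectively the primed representations) land in genuinely distinct orbits rather than accidentally coinciding. Once the congruence $cd \equiv 2^{m-1}-c$ is in hand this is a short counting check, and it simultaneously pins down the conjugation statement; I would verify the edge case $m=3$, where multiplication by $d=3$ on the odd residues mod $8$ produces the two orbits $\{1,3\}$ and $\{5,7\}$, as a sanity check.
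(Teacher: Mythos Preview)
Your proof is correct and follows essentially the same approach as the paper's: both hinge on the congruence $cd \equiv 2^{m-1}-c \pmod{2^m}$ for odd $c$, which the paper records as the pair $2^{m-1}+2i-1 \equiv -(2i-1)d$ and $(2^{m-1}+2i-1)d \equiv -(2i-1) \pmod{2^m}$. Your treatment is more explicit about the orbit bookkeeping that the paper leaves implicit (pairwise inequivalence of the $U_i$ and disjointness of the $U_i$ from the $U_j'$), but the core argument is identical.
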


\begin{proof}
First note that $U_1$ coincides with the representation $V_1$ and
every $U_i$ is Galois conjugate to $U_1$. Moreover clearly $U_1,
\ldots, U_{2^{m-3}}$ are pairwise non-isomorphic. Hence it suffices
to show that $U_i'$ is the complex conjugate of $U_i$. But this
follows from the congruences
$$
2^{m-1} + 2i -1 \equiv -(2i-1)d \mod 2^m \; \text{ and } \; (2^{m-1}
+ 2i -1)d \equiv -(2i-1) \mod 2^m.
$$
\end{proof}

\begin{prop} \label{prop4.4}
$$
H^0(Y,\omega_Y) = \oplus_{i=1}^{2^{m-3}} U_i.
$$
\end{prop}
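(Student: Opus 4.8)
The plan is to follow the strategy of Proposition \ref{thm4.2}, replacing the computation there by the Chevalley--Weil formula \eqref{eqn3.5} for the present monodromy data. Since $H^1(Y,\QQ)\simeq W_1$ by Proposition \ref{prop2.2}, Lemma \ref{lem4.3} gives
$$
H^1(Y,\CC)=W_1\otimes_{\QQ}\CC=\bigoplus_{i=1}^{2^{m-3}}\bigl(U_i\oplus U_i'\bigr),
$$
with $U_i'=\overline{U_i}$. As $g(Y)=2^{m-2}$ and each $U_i$ has dimension $2$, the $2^{m-3}$ summands $U_i$ alone already account for the full dimension $\dim H^0(Y,\omega_Y)=g(Y)$. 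Hence it suffices to show that each $U_i$ with $1\le i\le 2^{m-3}$ occurs at least once in $H^0(Y,\omega_Y)$; the dimension count then forces the claimed equality and simultaneously rules out any copy of the conjugates $U_i'$.

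To compute the multiplicity I would apply \eqref{eqn3.5} to the three branch points, whose monodromy generators are $a,\,b,\,(ab)^{-1}$. First I would record their orders and their eigenvalues on $U_i$: the element $a$ has order $2^m$ with eigenvalues $\xi^{2i-1},\xi^{(2i-1)d}$; the element $b$ has order $2$ with eigenvalues $\pm1$; and since $(ab)^2=a^{1+d}=a^{2^{m-1}}$, the element $ab$, and hence $(ab)^{-1}$, has order $4$. The matrix $U_i(ab)$ is anti-diagonal with entries $\xi^{2i-1}$ and $\xi^{(2i-1)d}$, so its eigenvalues are the two square roots of $\xi^{(2i-1)(1+d)}=\xi^{(2i-1)2^{m-1}}=-1$, namely $\pm i$; thus $(ab)^{-1}$ contributes eigenvalues $i,-i$ with $n_3=4$.

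The arithmetic heart of the computation is the identity
$$
(2i-1)+\bigl[(2i-1)d \bmod 2^m\bigr]=2^{m-1}\qquad(1\le i\le 2^{m-3}),
$$
which here plays the role of Lemma \ref{lem4.1}(2): it holds because $(2i-1)(1+d)=(2i-1)2^{m-1}\equiv 2^{m-1}\pmod{2^m}$, while for $i$ in this range both summands are small enough to force the representative to be exactly $2^{m-1}$. Granting this, at the point with monodromy $a$ the two fractional parts sum to $2-\tfrac12=\tfrac32$; at the point with monodromy $b$ they sum to $\langle 0\rangle+\langle-\tfrac12\rangle=\tfrac12$; and at the point with monodromy $(ab)^{-1}$ they sum to $\langle-\tfrac14\rangle+\langle-\tfrac34\rangle=1$. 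Therefore \eqref{eqn3.5} yields
$$
N=-2+\tfrac32+\tfrac12+1=1,
$$
so each $U_i$ appears exactly once, and the dimension count finishes the proof. I expect the only real obstacle to be the bookkeeping: correctly pinning down that $(ab)^{-1}$ has order $4$ with eigenvalues $\pm i$, and checking the displayed congruence so that the fractional parts at the $a$-point combine to $\tfrac32$ rather than $\tfrac12$.
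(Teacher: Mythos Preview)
Your proposal is correct and follows essentially the same approach as the paper: both reduce to a Chevalley--Weil computation with $n_1=2^m$, $n_2=2$, $n_3=4$, obtaining $N=-2+\tfrac32+\tfrac12+1=1$ via the same fractional-part contributions. Your write-up is in fact slightly more detailed, since you explicitly justify that $ab$ has order $4$ with eigenvalues $\pm i$ and isolate the identity $(2i-1)+[(2i-1)d\bmod 2^m]=2^{m-1}$, whereas the paper simply records the resulting numerators $2^m-2i+1$ and $2^{m-1}+2i-1$ without comment.
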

\begin{proof}
Since $2\cdot 2^{m-3} = g(Y)$, it follows from Lemma \ref{lem4.3}
that it suffices to show that every $U_i$ with $1 \leq i \leq
2^{m-3}$ occurs exactly once in the representation $H^0(Y,\omega_Y)$
of $G_m$. Again this is a consequence of the Theorem of
Chevalley-Weil; that is, equation \eqref{eqn3.5}.

Here $Y \ra \PP^1$ is branched over $3$ points in $\PP^1$ with $n_1
= 2^m$, $n_2 = 2$ and $n_3 = 4$, and we have for the representation
$U_i, \; 1 \leq i \leq 2^{m-3}$,
\begin{eqnarray*}
N &= & -2 + \left\langle -\frac{(2i-1)}{2^m} \right\rangle +
\left\langle -\frac{(2i-1)d}{2^m} \right\rangle + \left\langle
-\frac{1}{2} \right\rangle
    + \left\langle -\frac{1}{4} \right\rangle + \left\langle -\frac{3}{4} \right\rangle\\
    & = & -2 + \frac{2^m - 2i+1}{2^m} + \frac{2^{m-1} +2i-1}{2^m} + \frac{1}{2} + \frac{3}{4} + \frac{1}{4}\\
    & = & 1.
\end{eqnarray*}
\end{proof}

As a consequence we get
\begin{cor} \label{cor4.5}
Let $\xi = e^{\frac{2\pi i}{2^m}}$. Then we have

\noindent {\em (1):} The CM-type of $JY$ is given by $\{ \xi^{2i-1}, \;|\; i=1, \ldots, 2^{m-2} \}$;\\
{\em (2):} The CM-type of $JX$ is given by $\{ \xi^{2i-1} +
\xi^{(2i-1)d} \;|\; i=1, \ldots, 2^{m-3} \}$.
\end{cor}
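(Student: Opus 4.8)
The plan is to read both CM-types off the Hodge decomposition $H^1(\cdot,\CC) = H^0(\cdot,\omega)\oplus\overline{H^0(\cdot,\omega)}$, using the explicit description of $H^0(Y,\omega_Y)$ from Proposition \ref{prop4.4}. Recall that once the CM-field is identified with a commutative subalgebra of $\End_{\QQ}$, the CM-type is exactly the set of embeddings $\varphi$ whose eigenspace $\CC_\varphi$ lies in the holomorphic part $H^{1,0} = H^0(\cdot,\omega)$. So the whole computation reduces to (a) pinning down which concrete element of $\QQ[G_m]$ generates the CM-field, and (b) reading off its eigenvalues on the summands $U_i$ supplied by Proposition \ref{prop4.4}.

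For part (1) I would identify the cyclotomic field $\QQ(\xi)\subset\End_{\QQ}(JY)$ (which exists by Proposition \ref{prop4.2}) with the subalgebra generated by the image of $a$. Since $V_1(a)=\left(\begin{smallmatrix}\xi&0\\0&\xi^d\end{smallmatrix}\right)$ has minimal polynomial $t^2-(\xi+\xi^d)t-1$ over $K_V$ (using $\xi^{1+d}=\xi^{2^{m-1}}=-1$), this subalgebra is a field isomorphic to $\QQ(\xi)$ with $a$ corresponding to $\xi$. Hence on $\CC_\varphi$ the element $a$ acts by $\varphi(\xi)$, and since $H^1(Y,\QQ)$ is one-dimensional over $\QQ(\xi)$ these eigenvalues separate the $\CC_\varphi$. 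By Proposition \ref{prop4.4} the eigenvalues of $a$ on $H^0(Y,\omega_Y)=\bigoplus_{i=1}^{2^{m-3}}U_i$ are $\xi^{2i-1}$ and $\xi^{(2i-1)d}$, so the embeddings with $\CC_\varphi\subset H^{1,0}$ are exactly $\xi\mapsto\xi^{2i-1},\,\xi^{(2i-1)d}$. Finally I would use $(2i-1)d\equiv 2^{m-1}-(2i-1)\bmod 2^m$ to check that, as $i$ runs from $1$ to $2^{m-3}$, these exponents exhaust the odd residues $1,3,\dots,2^{m-1}-1$, yielding exactly $\{\xi^{2i-1}:i=1,\dots,2^{m-2}\}$.

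For part (2) I would realize the generator $\xi+\xi^d$ of $K_V\subset\End_{\QQ}(JX)$ by the endomorphism $a+a^d$. Because $d^2\equiv 1\bmod 2^m$, it commutes with $b$ and hence descends to $H$-invariants, and on each summand one computes $U_i(a)+U_i(a^d)=(\xi^{2i-1}+\xi^{(2i-1)d})\,\mathrm{Id}$, so $a+a^d$ acts on the line $U_i^H$ by the scalar $\xi^{2i-1}+\xi^{(2i-1)d}$. Since $H^0(X,\omega_X)=H^0(Y,\omega_Y)^H=\bigoplus_{i=1}^{2^{m-3}}U_i^H$, the embeddings $\psi$ of $K_V$ with $\CC_\psi\subset H^{1,0}(X)$ are precisely $\xi+\xi^d\mapsto\xi^{2i-1}+\xi^{(2i-1)d}$ for $i=1,\dots,2^{m-3}$, which is statement (2).

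The eigenvalue bookkeeping is routine; the point needing care is making the two identifications precise. Proposition \ref{prop4.1} already supplies that the image of $\QQ[G_m]$ in $\End(W_1)$ is $M_2(K_V)$, so the main remaining task is to verify that $a$ (resp. $a+a^d$) generates a copy of $\QQ(\xi)$ (resp. $K_V$) acting as the CM-field under the isomorphism $H^1(Y,\QQ)\cong W_1$ of Proposition \ref{prop2.2}, and that this group action is compatible with the Hodge structure. One must also confirm that the listed embeddings are pairwise non-conjugate and form a genuine CM-type, i.e. a set of representatives of the complex-conjugate pairs. This is exactly where Lemma \ref{lem4.3} enters: the relation $U_i'=\overline{U_i}$ shows that the complementary embeddings are the complex conjugates of the listed ones, so no embedding occurs together with its conjugate.
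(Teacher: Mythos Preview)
Your argument is correct and follows essentially the same route as the paper: read off the CM-type of $JY$ from the eigenvalues of $a$ on $H^0(Y,\omega_Y)=\bigoplus_{i=1}^{2^{m-3}}U_i$ via Proposition~\ref{prop4.4}, rewrite the resulting set $\{\xi^{2i-1},\xi^{(2i-1)d}\}$ using the congruence $(2i-1)d\equiv 2^{m-1}-(2i-1)\bmod 2^m$, and then descend to $JX$. The only cosmetic difference is that the paper deduces (2) in one line from (1) by observing that $K_V$ is the fixed field of $\xi\mapsto\xi^d$, whereas you make this explicit by realizing the generator as $a+a^d$ and computing its scalar action on each $U_i^H$; your version is more detailed but not genuinely different.
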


\begin{proof}
According to Proposition \ref{prop4.4} the CM-type of $JY$ is $\{
\xi^{2i-1},\; \xi^{(2i-1)d} \;|\; i=1, \ldots, 2^{m-3} \}$. This
implies (1), since $\xi^{(2i-1)d} \equiv 2^{m-1} -2i+1 \mod 2^m$ for
$i= 1, \ldots, 2^{m-3}$. (2) is an immediate consequence of (1),
since the CM-field of $X$ is the fixed field of the involution $\xi
\mapsto \xi^d$.
\end{proof}

\begin{prop}
The Jacobian $JX$ and the Prym variety $P$ of the covering $Y \ra X$
are simple abelian varieties of dimension $2^{m-3}$.
\end{prop}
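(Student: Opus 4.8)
The plan is to show that $JX$ is simple by proving that its CM-type $\Phi$, computed in Corollary \ref{cor4.5}(2), is a \emph{primitive} CM-type for the field $K_V = \QQ[\xi + \xi^d]$. Since $JX$ has complex multiplication by the field $K_V$ with $[K_V:\QQ] = 2^{m-2} = 2 g(X)$, the abelian variety $JX$ is simple precisely when the pair $(K_V, \Phi)$ is primitive, i.e. when $\Phi$ is not induced from any proper CM-subfield of $K_V$. Because $K_V$ is a cyclic CM-field (as stated in the introduction), its subfields are totally ordered, and primitivity can be checked against a short list of candidate subfields; in fact for a cyclic CM-field the relevant test reduces to verifying that $\Phi$ is not the lift of the CM-type of the unique CM-subfield of index $2$. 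Once $JX$ is shown simple, the isogeny $JX \sim P$ from Proposition \ref{prop4.2} (together with the preceding Corollary) gives that $P$ is simple as well, and both have dimension $2^{m-3}$.

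First I would set up the Galois action explicitly. Write $G = \Gal(K_V/\QQ)$, which is cyclic of order $2^{m-2}$, generated by the automorphism $\gamma$ induced by $\xi + \xi^d \mapsto \xi^3 + \xi^{3d}$ (or another primitive generator of $(\ZZ/2^m)^\times$ modulo the relation identifying $\xi$ with $\xi^d$). Under the identification of embeddings $K_V \hookrightarrow \CC$ with residues $2i-1 \bmod 2^m$ up to the involution $x \mapsto xd$, the CM-type of Corollary \ref{cor4.5}(2) is
\begin{equation} \label{eq:phi}
\Phi = \{\, 2i-1 \;|\; i = 1, \ldots, 2^{m-3} \,\}.
\end{equation}
The complex conjugation on $K_V$ corresponds to $x \mapsto xd = x(2^{m-1}-1)$, and one checks that $\Phi$ together with its conjugate $\overline{\Phi}$ partitions the full set of residues $\{2i-1 : i = 1, \ldots, 2^{m-2}\}$, confirming that $\Phi$ is a genuine CM-type. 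I would then translate the primitivity criterion into the language of Shimura--Taniyama, as was done for $JX$ in Proposition \ref{cor4.4}: $\Phi$ is primitive if and only if the only element $\delta \in G$ fixing $\Phi$ setwise (i.e. with $\delta \Phi = \Phi$) is the identity.

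The heart of the argument is this stabilizer computation. Suppose some $\delta \in G$, corresponding to multiplication by an odd integer $c$ modulo $2^m$, satisfies $c \cdot \Phi = \Phi$ as subsets of $(\ZZ/2^m)^\times$ modulo the involution $\langle d \rangle$. I would show that this forces $c \equiv 1 \bmod 2^m$ (modulo $\langle d\rangle$), so the stabilizer is trivial. The key structural feature is that $\Phi$ in \eqref{eq:phi} consists exactly of the odd residues lying in the ``first half'' $1 \le 2i-1 \le 2^{m-1}-1$, while $\overline{\Phi}$ occupies the second half; thus $\Phi$ is characterized by a sign condition (analogous to the $\beta_\ell > 0$ criterion of Lemma \ref{lem3.8} in the first series), and any $c \ne \pm 1$ will move some element of the first half into the second, breaking the invariance. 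I expect this sign/half-interval bookkeeping to be the main obstacle: one must rule out every nontrivial $c$, and the cleanest route is to observe that a nontrivial $\delta$ stabilizing $\Phi$ would make $\Phi$ a union of cosets of a nontrivial subgroup, hence induced from a proper CM-subfield, contradicting the explicit ``half-interval'' description. Having established triviality of the stabilizer, primitivity of $(K_V,\Phi)$ follows, $JX$ is simple, and the simplicity of $P$ together with the common dimension $2^{m-3}$ is immediate from Proposition \ref{prop4.2}.
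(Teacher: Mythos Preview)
Your overall strategy---reduce simplicity of $JX$ to primitivity of its CM-type, then transport the conclusion to $P$ via the isogeny $P \sim JX$---is correct. But you take an unnecessarily laborious route, and along the way you make a claim that is actually false: you refer to ``the unique CM-subfield of index $2$'' of $K_V$, but $K_V$ has \emph{no} proper CM-subfield at all. Seeing this is exactly the paper's argument, and it makes the stabilizer computation you propose entirely superfluous.

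Here is the point. The Galois group $\Gal(\QQ(\xi)/\QQ)$ is $\langle \sigma \rangle \times \langle \tau \rangle$ with $\sigma:\xi\mapsto\xi^5$ of order $2^{m-2}$ and $\tau:\xi\mapsto\xi^d$ of order $2$; since $K_V$ is the fixed field of $\tau$, we have $\Gal(K_V/\QQ)=\langle\sigma\rangle\simeq\ZZ/2^{m-2}\ZZ$. Complex conjugation on $K_V$ is nontrivial (as $K_V$ is CM) and of order $2$, hence equals the unique involution $\sigma^{2^{m-3}}$. In a cyclic $2$-group every nontrivial subgroup contains the unique element of order $2$; therefore every proper subfield of $K_V$ is fixed by complex conjugation, i.e.\ is totally real. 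Consequently there is no proper CM-subfield from which $\Phi$ could be induced, and \emph{every} CM-type on $K_V$ is automatically primitive. No analysis of the particular type $\Phi$ is needed.

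Your stabilizer approach is not wrong---indeed, the observation above shows immediately that the stabilizer of any CM-type in $\langle\sigma\rangle$ is trivial, since a nontrivial stabilizer would have to contain complex conjugation $c$, forcing $c\Phi=\Phi$, which contradicts $c\Phi=\overline{\Phi}\neq\Phi$. But the explicit ``half-interval'' bookkeeping you outline is work you never need to do, and you leave it as the acknowledged ``main obstacle'' without carrying it out. The paper's field-theoretic observation dispatches the whole thing in two lines.
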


This proves part (4) of Theorem 2 of the introduction.
\begin{proof}
It suffices to show that the field $K_V = \QQ(\xi+\xi^d)$ does not
admit a proper CM-subfield, since then every CM-type of it is
primitive, and in particular $JX$ is a simple abelian variety.

It is well known that the Galois group of the extension $\QQ(\xi)|\QQ$ is
$$
\langle \sigma \rangle \times \langle \tau \rangle ,
$$
with $\langle \sigma \rangle$ is cyclic of order $2^{m-2}$ generated
by $\sigma: \xi \mapsto \xi^5$, and $\tau$ is the involution $\tau:
\xi \mapsto \xi^{d}$. Therefore the Galois group of $K_V|\QQ$, which
is the fixed field of $\tau$, is $\langle \sigma \rangle \simeq
\ZZ/2^{m-2} \ZZ$. Its only element of order $2$ is
$\sigma^{2^{m-3}}$, which must be complex conjugation in $K_V$.
Since every nontrivial subgroup of $\langle \sigma \rangle$ contains
$\sigma^{2^{m-3}}$, this implies that every proper subfield of $K_V$
is real.
\end{proof}

\subsection{Equations}

Let the notation be as in the previous subsections. Here we want to
give equations for the curves $Y$ and $X$.

First note that the center $Z = Z(G_m) = \langle a^{2^{m-1}}
\rangle$ of the group $G_m$ is of order two; furthermore,
$$
  |Z\backslash G_m / \langle a \rangle |  = 2, \quad
|Z\backslash G_m / \langle b \rangle |  = 2^{m-1},\quad
|Z\backslash G_m / \langle ab \rangle |  = 2^{m-1}.
$$
According to \cite{r}, for any subgroup $H$ of $G$ acting on a curve
$Y$ with monodromy $g_1 , \ldots , g_t$, the
genus of the quotient $Y/H$ is given by
$$
g_{Y/H} = [G:H](g_{Y/G}-1)  +  1  + \frac{1}{2} \;
\sum_{j=1}^t \left( [G:H]-|H \backslash G /\langle g_j\rangle| \right) \ .
$$
We obtain that in our case $g(Y/Z) = 0$, and therefore $Y$ is
hyperelliptic.

We may choose coordinates in such a way that an affine equation for
$Y$ is
$$
 y^2 = x(x^{2^{m-1}} -1),
$$
and if $\xi =\xi_{2^m}$, then the automorphisms $a$ and $b$ of the
curve $Y$ are given by
$$
a(x,y) = (\xi^2 \:x,\; \xi \:y) \; \; \; ; \; \;   b(x,y) = \left( \frac{1}{\xi^2 \:x},\; \;
-i\:\xi^d\:\displaystyle\frac{y}{x^{2^{(m-2)}+1}}\right) \, .
$$

Note that
$$
a^{2^{m-1}}(x,y) =(x,-y) =: j(x,y),
$$
where $j$ denotes the hyperelliptic involution of $Y$,

Also, $b$ and $j$ generate a Klein
group, with the following associated diagram of covers.

$$ \xymatrix{
   & Y \ar[d]_{\pi_b} \ar[dr]^{\pi_{b  j}} \ar[dl]_{\pi_j} &  \\
  Y/{\langle j \rangle} =\mathbb{P}^1 \ar[dr]_{}  &  Y/{\langle b \rangle}
  = X \ar[d]_{}  & Y/{\langle b \, j \rangle} \ar[dl]^{} \\
   & \PP^1 &    }
$$
$\pi_j : Y \to  \mathbb{P}^1$ is the hyperelliptic covering ramified
over $\{ 0, \infty , \xi_{2^{m-1}}^i : 0 \leq i \leq 2^{m-1}-1\}$,
$\pi_b$ ramifies at the two fixed points $P_i = (\xi^d,y)$ with $y^2
= -2\xi^d$ of $b$, and $\pi_{bj}$ ramifies at the two fixed points
$(-\xi^d,y)$ with $y^2 = 2\xi^d$ of $bj$.

Since $X$ is then hyperelliptic and ramifies over $\pi_b(P_1)$,
$\pi_b(P_2)$, and the images under $\pi_b$ of the Weierstrass points
of $Y$, we may consider $f : \mathbb{P}^1 \to \mathbb{P}^1$ of
degree two and invariant under $x\to \frac{1}{\xi^2 \:x}$ such as
$$
f(x) = \frac {\xi\,(\xi\,x + 1)^{2}}{\xi^{2}\,x^{2} + 1}
$$
and adequate $g(x,y)$ so that $\pi_b(x,y) = (f(x), g(x,y))( =
(u,v))$, and we obtain

$$
X = Y/{\langle b \rangle} : v^2 = u(u-\xi) \prod_{k=0}^{2^{m-2}-1}
(u-f(\xi^{2k})).
$$

\begin{rem}
In the case $m =4$ we obtain the curve of genus two
$$
X = Y/{\langle b \rangle} : v^2 = u^6-5\xi_{16} u^5+2\xi_{16}^2
u^4+14\xi_{16}^3u^3-11\xi_{16}^4u^2-\xi_{16}^5u
$$
whose Jacobian has complex multiplication by $\mathbb{Q}(\xi_{16} +
\xi_{16}^7) = \mathbb{Q}(\sqrt{-2+\sqrt{2}})$.

Thus we generalize an example given in \cite{vw}, where it is given
by the equation
$$
y^2 = -x^5+3x^4+2x^3-6x^2-3x+1
$$
The curves are isomorphic, because they have the same Igusa
invariants

$i_1 := I_2^5/I_{10} = 1836660096 = 2^7\cdot 3^{15}$,

$ i_2 := I_2^3 \cdot I_4/I_{10} = 28343520 = 2^5 \cdot 3^{11} \cdot
5$, and

 $i_3 := I_2^2 \cdot I_6/I_{10} = 9762768 = 2^4 \cdot 3^9 \cdot 31$.
\end{rem}

\end{document}